    \newtheorem{thm}{Theorem}[section]
    \newtheorem{prop}[thm]{Proposition}
    \newtheorem{lemma}[thm]{Lemma}
    \newtheorem{cor}[thm]{Corollary}
    \newtheorem{observation}[thm]{Observation}
    \newtheorem{ex}[thm]{Example}
    \newtheorem{remark}[thm]{Remark}
    \newcommand{\stb}{, \ldots ,}
    \newcommand{\N}{{\mathbb{N}}}
    \newcommand{\Q}{{\mathbb{Q}}}
    \newcommand{\R}{{\mathbb{R}}}
    \newcommand{\C}{{\mathbb{C}}}
    \def\bl{\textcolor{blue}}
    \newcommand{\subjclass}[2][1991]{%
      \let\@oldtitle\@title%
      \gdef\@title{\@oldtitle\footnotetext{#1 \emph{MSC classes.} #2}}%
    }
    \newcommand{\keywords}[1]{%
      \let\@@oldtitle\@title%
      \gdef\@title{\@@oldtitle\footnotetext{\emph{Keywords:} #1.}}%
    }
\begin{document}

    \title{Decompositions of the positive real numbers into disjoint sets closed under addition and multiplication}
    \author{Gergely Kiss \thanks{Alfr\'ed R\'enyi Institute of Mathematics, e-mail: kigergo57@gmail.com}, G\'abor Somlai \thanks{On an unpaid leave at E\"otv\"os Lor\'and University, Department of Algebra and Number Theory and Alfr\'ed Rényi Institute of Mathematics, A Fulbright scholar at the Graduate Center of the City University of New York, e-mail: gabor.somlai@ttk.elte.hu}, Tam\'as Terpai\thanks{E\"otv\"os Lor\'and University, Department of Analysis}}
    \keywords{derivation, decomposition, closed under addition and multiplication}
    
    \subjclass[2020]{11C08, 11P99, 13N15, 13P05}

    \maketitle
    \begin{abstract}
    The main purpose of this paper is to prove that the positive real numbers can be decomposed into finitely many disjoint pieces which are also closed under addition and multiplication. 
    As a byproduct of the argument we determine all the possible decompositions of the transcendental extension of the rational field of rank one into two pieces.
    Further, we prove that the positive elements of a real algebraic extensions of the rational numbers are indecomposable into two pieces. 
    \end{abstract}
    
    \section{Introduction}
    R\'obert Freud asked whether the positive real numbers can be cut into two disjoint sets both closed under addition and multiplication. 
    
    The analogous question can be asked for any subfield of $\R$. We denote by $K^{+}$ the positive elements of a subset $K \subset \R$.   
    We call a subset of $\R$ {\it invariant}, if it is closed under addition and multiplication.  By a {\it decomposition of} $K^{+}$ (or $K$) into $k$ subsets we mean that 
    $K^{+}$ is the union of $k$ pairwise disjoint non-empty invariant subsets. We denote this by $$K^{+}= H_1 \sqcup \ldots \sqcup H_k.$$  
    We say that a decomposition of $K^{+}$ into non-empty subsets is {\it non-trivial} if $k \ge 2$.
    Freud asked whether there exists a non-trivial decomposition of $\R^{+}$ into two pieces. 
    Finally, we say that $K^+$ is {\it $k$-decomposable} ($k \ge 2$) if there exists a non-trivial decomposition of $K^{+}$ into $k$ invariant pieces. During the preparation of this paper the authors noticed that the same problem was also discussed by Kane \cite{kane} for the positive real numbers. Note however, that our method is fundamentally different from that of Kane.
    
    Elekes and Keleti \cite{elek} investigated the decomposition of the real line $\R$ into Borel subsets which are closed under addition. 
    For a decomposition $\R = \cup_{\alpha <\kappa } A_{\alpha}$, where each $A_{\alpha}$ is a Borel set closed under addition, they proved that every $A_{\alpha}$ is of Lebesgue measure zero and is of first category, or is equal to $\R$, $(-\infty, 0)$, $(-\infty, 0]$, $(0, \infty)$, or $[0, \infty)$.
    Furthermore, if $\R$ is decomposed into the disjoint union of $\kappa$ Borel sets, all closed under addition, then $\kappa \in \{1,2,3\}$ or $\kappa$ is uncountable. 
    They also proved that it is consistent with ZFC that $\R$ has a decomposition into $\omega_1$ pieces which are all $F_{\sigma}$ and closed under addition. 
    
    
    We give simple constructions of non-trivial decompositions of the set of positive elements of transcendental extensions of $\Q$. 
    It will be shown in Section \ref{SeConstruction} that if $\alpha$ is a transcendental number, then any decomposition of $\Q(\alpha)^+$ is simply determined by a unique non-trivial derivation. Our method to answer the original question relies on the fact that such a derivation can be extended to a derivation on $\R$, see \cite{K}. 
    Since our construction is based on the decomposition of a finitely generated subfield of $\R$, it is natural to investigate all the possible decompositions of $\Q[\alpha]^+$. As a corollary presented in Section \ref{SeConstruction} we prove the following. 
    \begin{thm}\label{thmRszetvag}
    The positive real numbers can be written as the union of $k$ disjoint nonempty invariant sets for every natural number $k$.
    \end{thm}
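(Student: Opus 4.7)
The plan is to lift to $\R$ the derivation-based construction sketched in Section \ref{SeConstruction} for finitely generated purely transcendental subfields. Choose algebraically independent positive real numbers $\alpha_1, \ldots, \alpha_{k-1}$ and complete $\{\alpha_1, \ldots, \alpha_{k-1}\}$ to a transcendence basis $B$ of $\R$ over $\Q$. For each $i \in \{1, \ldots, k-1\}$ the assignment $D_i(\alpha_i) = \alpha_i$, $D_i(\beta) = 0$ for $\beta \in B \setminus \{\alpha_i\}$ uniquely extends to a $\Q$-derivation $D_i : \R \to \R$: the extension to $\Q(B)$ is forced by the Leibniz rule, and the further extension to $\R$ is automatic because $\R/\Q(B)$ is separably algebraic, which is the extension result invoked in \cite{K}. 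For $x \in \R^+$ set $s_i(x) := D_i(x)/x$; the identities
\[
s_i(xy) = s_i(x) + s_i(y), \qquad s_i(x+y) = \frac{x}{x+y}\, s_i(x) + \frac{y}{x+y}\, s_i(y)
\]
say that $s_i$ turns products into sums and turns sums into convex combinations.

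I would then define $k$ pieces via a lexicographic sign pattern on the vector $(s_1(x), \ldots, s_{k-1}(x))$: for $i = 1, \ldots, k-1$ put
\[
H_i := \{ x \in \R^+ : s_1(x) \le 0, \ldots, s_{i-1}(x) \le 0, \ s_i(x) > 0 \},
\]
and
\[
H_k := \{ x \in \R^+ : s_j(x) \le 0 \text{ for all } 1 \le j \le k-1 \}.
\]
This is manifestly a partition of $\R^+$. Closure of each $H_i$ under multiplication follows because a sum of non-positive numbers is non-positive and a sum of strictly positive numbers is strictly positive; closure under addition follows from the convex combination formula for $s_i(x+y)$, which preserves the same sign constraints. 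Non-emptiness is witnessed by the basis elements themselves: $s_j(\alpha_i) = \delta_{ij}$ gives $\alpha_i \in H_i$ for $i < k$, and $s_1(1/\alpha_1) = -1$ together with $s_j(1/\alpha_1) = 0$ for $j > 1$ gives $1/\alpha_1 \in H_k$.

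The main obstacle is designing the partition of $\R^{k-1}$ so that each block is simultaneously closed under coordinate-wise addition (required because $s_i$ is multiplicative-to-additive) and convex (required because $s_i$ produces a convex combination under addition), while still partitioning all of $\R^{k-1}$ into exactly $k$ non-empty regions; naive sign-orthant partitions give $2^{k-1}$ pieces and do not yield a prescribed count. The lexicographic pattern above resolves this uniformly in $k$. Once the partition is fixed, the invariance verifications reduce to the two one-line sign checks sketched above, and the existence of the $D_i$ is the standard derivation-extension theorem already used in Section \ref{SeConstruction}.
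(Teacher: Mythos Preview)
Your proof is correct and follows essentially the same approach as the paper: construct $k-1$ derivations on $\R$ via a transcendence basis (this is exactly the extension result from \cite{K} invoked in Section~\ref{SeConstruction}) and partition $\R^+$ according to sign conditions on them. Your packaging via the logarithmic derivatives $s_i=D_i(\cdot)/(\cdot)$ and the explicit lexicographic sign pattern is a tidier version of the paper's iterative cutting in Proposition~\ref{propnresz}, but the underlying construction and the key ingredients (Leibniz rule for multiplicative closure, additivity/convexity for additive closure, derivation extension to $\R$) are the same.
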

    One can simply extend this construction to a decomposition into countable infinitely many pieces.
    
    Our main goal is to characterise the possible decompositions of $K^+$ for a subfield $K$ of $\R$. 
    We introduce the notion of separating hyperplanes, which play an important role in the decomposition of transcendental extensions of $\Q$ of degree $1$. 
    One of our main results is Theorem \ref{thm1valt}, which describes the possible 2-decompositions of $\Q(\alpha)^+$, where $\alpha$ is a transcendental number. Since the statement is somewhat technical we will only formulate the precise description in Section \ref{SeConstruction}.
    On the other hand, non-trivial finite decompositions for the algebraic extension of $\Q$ do not exist. 
    \begin{thm}\label{thmalgebraic}
    Let $a$ be an algebraic element over $\Q$. Then $\Q(a)^{+}$ is not 2-decomposable. 
    \end{thm}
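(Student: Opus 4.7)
I would argue by contradiction: suppose $\Q(a)^+ = H_1 \sqcup H_2$ is a non-trivial decomposition. Since $\Q^+$ is itself not 2-decomposable (any $r \in H_2 \cap \Q^+$ would force $\Q^+ = \Q^+ \cdot r \subseteq H_2$ by multiplicative closure, contradicting $1 \in H_1$), we may assume $\Q^+ \subseteq H_1$. A short argument shows $\Q^+ \cdot H_2 \subseteq H_2$: if $y \in H_2$ but $y/n \in H_1$ for some $n \in \N$, then $y = n(y/n) \in H_1$ by additive closure, contradicting $y \in H_2$. Similarly, $y \in H_2$ forces $y^{-1} \in H_1$, lest $1 = y \cdot y^{-1} \in H_2 \cdot H_2 \subseteq H_2$.

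The central tool in my approach is the following \emph{radical multiplication lemma}: for any $b \in \Q(a)^+$ with $b^m \in \Q^+$ for some $m \ge 1$, and any $y \in H_2$, we have $yb \in H_2$. Indeed, if $yb \in H_1$ then $(yb)^m = b^m y^m \in H_1$ by multiplicative closure, but $b^m \in \Q^+$ and $y^m \in H_2$ give $b^m y^m \in \Q^+ \cdot H_2 \subseteq H_2$, a contradiction.

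With this in hand, for $y \in H_2$ I would consider the norm $N = \mathrm{Nm}_{\Q(a)/\Q}(y) \in \Q^*$; replacing $N$ by $|N|$, the element $z := |N|/y \in \Q(a)^+$ satisfies $yz \in \Q^+$, forcing $z \in H_1$. If one can decompose $z$ as a $\Q^+$-linear combination $z = \sum_i c_i b_i$ with each $b_i^{n_i} \in \Q^+$, then the radical multiplication lemma yields $y b_i \in H_2$ for every $i$, so $yz = \sum_i c_i (y b_i) \in H_2$ (a sum of $H_2$-elements), while $yz = |N| \in \Q^+ \subseteq H_1$; this is the sought contradiction. For $\Q(a)$ a radical extension (for instance $a = \sqrt[n]{q}$), the basis $\{1, a, \ldots, a^{n-1}\}$ consists of radicals and the decomposition of $z$ is immediate; the same idea handles $\Q(\sqrt{2})$, $\Q(\sqrt[3]{2})$, and more generally any extension generated by radicals.

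The main obstacle is the general case in which $\Q(a)$ possesses no non-trivial element with a rational power (for example $a$ a root of $x^3 - x - 1$); there the radical multiplication lemma alone is insufficient. I expect the paper circumvents this by invoking the separating-hyperplane and derivation machinery developed in Section \ref{SeConstruction}: one extends the putative decomposition to a transcendental extension $\Q(a)(t)^+$, shows the resulting decomposition is governed by a derivation of that field, and uses the fact that every derivation $D$ of the algebraic extension $\Q(a)/\Q$ vanishes (from $D(p(a)) = p'(a)\, D(a) = 0$ with $p$ the minimal polynomial of $a$, and $p'(a) \neq 0$ by separability in characteristic zero, forcing $D(a) = 0$ and hence $D \equiv 0$). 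Consequently the induced derivation is identically zero on $\Q(a)$, collapsing the decomposition on $\Q(a)^+$ to the trivial one, which is the desired contradiction.
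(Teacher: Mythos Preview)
Your radical-multiplication lemma is correct and handles radical extensions nicely, but as you yourself concede, it does not cover extensions such as $\Q(a)$ with $a^3=a+1$, so the proposal as written is incomplete. The speculated completion is where the real gap lies: you guess that the paper \emph{extends} the decomposition of $\Q(a)^+$ to a transcendental overfield $\Q(a)(t)^+$ and then invokes the derivation classification there. Two problems with this. First, there is no evident way to extend a given $2$-decomposition of $\Q(a)^+$ to one of $\Q(a)(t)^+$; you would have to construct such an extension, and you give no indication how. Second, the classification in Theorem~\ref{thm1valt} is for $\Q(\alpha)$ with $\alpha$ transcendental over~$\Q$, not for $K(t)$ over a number field $K$, so even granting an extension the tool would not apply as stated.

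The paper goes in the opposite direction: it \emph{pulls back} rather than extends. The crucial design choice is that Proposition~\ref{propegyvaltpolinom} (and the whole separating-hyperplane analysis of Section~\ref{secdegree1}) is formulated for $\Q[x]^+_{x=\alpha}$ with $\alpha$ an \emph{arbitrary} real number, not just a transcendental one. Given a putative $2$-decomposition $\Q(a)^+=H_1\sqcup H_2$, one uses the surjection $\Phi:\Q[x]\to\Q(a)$, $x\mapsto a$, to pull back to a $2$-decomposition of $\Q[x]^+_{x=a}$; the separating hyperplane downstairs pulls back to a separating hyperplane upstairs containing $\ker\Phi$. Proposition~\ref{propegyvaltpolinom} forces this hyperplane to be $\mathcal H_a=\{p\in\R[x]:p'(a)=0\}$. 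But the minimal polynomial $m$ of $a$ lies in $\ker\Phi\subset\Phi^{-1}(H)$ while $m'(a)\ne 0$ --- precisely your separability observation --- so $m\notin\mathcal H_a$, a contradiction. Thus the fact you correctly isolated, that any derivation of $\Q(a)$ over $\Q$ vanishes because $m'(a)\ne 0$, is indeed the punchline; it is just reached by pulling back to the polynomial ring rather than by pushing forward to a larger field.
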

    
    The paper is organised as follows. In Section \ref{SeConstruction} we construct decompositions of finite degree transcendental extensions of $\Q$. Section \ref{sec3} contains a useful general lemma (Lemma \ref{lemraceltol}) that helps describing the decompositions of $\R^+$. Further it is proved in this section that the elements of a finite decomposition of $\R^+$ cannot all be Lebesgue measurable. We describe the 2-decompositions of a polynomial ring over $\Q$ in Section \ref{secdegree1}. Such a decomposition is lifted to its field of fractions $\Q(x)$ in Section \ref{sec5}. In Section \ref{sec6} we prove that a finite extension of $\Q$ cannot be non-trivially decomposed into 
    two pieces. 
    Finally in Section \ref{sec7} we formulate some questions that are closely related to our results.

    \section{Fundamental construction}\label{SeConstruction}
    Let $K=\mathbb{Q}(\alpha)$ be a transcendental extension of $\mathbb{Q}$. Now we present the basic construction that gives a decomposition of $K^+$.
    
     We may identify the elements of $K$ with rational functions over $\mathbb{Q}$ so $K$ can be written as $\frac{p}{q}(\alpha)$, where $p$ and $q$ are in $\mathbb{Q}\left[ x\right]$. Let $\Q(\alpha)^+$ consist only of the elements $r \in \mathbb{Q}(x)$ with $r(\alpha)>0$. 
    
    Let   
    \begin{equation*} 
    \begin{split}
    &H_+ =\left\{ r \in \Q(\alpha)^+ \mid r'( \alpha ) > 0 \right\},\\
    &H_0 =\left\{ r \in  \Q(\alpha)^+ \mid r'( \alpha ) = 0 \right\},\\
    &H_- =\left\{ r \in  \Q(\alpha)^+ \mid r'( \alpha ) < 0 \right\}, 
    \end{split}
    \end{equation*}
    where $r'$ denotes the derivative of $r$. Note that $H_0=\Q^+$. 
    \begin{prop}\label{lemSeCo1}
    $H_+ \sqcup H_0 \sqcup H_-=K^{+}$ gives a 3-decomposition of $K^{+}$.
    \end{prop}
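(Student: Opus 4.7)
My plan is to verify the four conditions defining a 3-decomposition of $K^+$: disjointness, union equal to $K^+$, closure of each piece under $+$ and $\cdot$, and non-emptiness of each piece. The central observation is that since $\alpha$ is transcendental, each element of $K$ has a unique expression as a rational function in $\Q(x)$ evaluated at $\alpha$, so the map $D \colon K \to \R$ defined by $D(r) = r'(\alpha)$ is a well-defined derivation: it is additive and satisfies the Leibniz rule $D(rs) = D(r)\,s(\alpha) + r(\alpha)\,D(s)$.

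Disjointness and the equality of the union with $K^+$ follow immediately from the trichotomy of the real numbers applied to $D(r)$ for each $r \in K^+$. Closure under addition is then immediate from additivity of $D$: a sum of two positives is positive, of two negatives is negative, and of two zeros is zero. For multiplicative closure, if $r, s \in H_+$, then $D(r), D(s) > 0$, and also $r(\alpha), s(\alpha) > 0$ because $r, s \in K^+$, so by the Leibniz rule $D(rs)$ is a sum of two positive quantities and lies in $H_+$. For $r, s \in H_-$, the derivatives are negative while the values remain positive, so both summands $D(r)\,s(\alpha)$ and $r(\alpha)\,D(s)$ are negative, giving $D(rs) < 0$; hence $H_-$ is closed under multiplication. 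Closure of $H_0 = \Q^+$ under both operations is immediate from the definition.

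For non-emptiness, pick any natural number $n > |\alpha|$, so that $n + \alpha, n - \alpha \in K^+$; then $D(n + \alpha) = 1$ and $D(n - \alpha) = -1$ place these elements into $H_+$ and $H_-$ respectively, and $1 \in H_0$.

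The only step that requires any thought is multiplicative closure of $H_-$: one might initially worry that the Leibniz rule mixes signs, but the crucial point is that \emph{both} summands in $D(rs) = D(r)\,s(\alpha) + r(\alpha)\,D(s)$ carry the same sign, because the values $r(\alpha), s(\alpha)$ are positive throughout $K^+$. Beyond this, the argument is a direct computation, so I expect no serious obstacle.
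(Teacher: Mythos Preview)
Your proof is correct and follows essentially the same approach as the paper: both rely on the additivity of the derivation for closure under addition and on the Leibniz rule $(rs)'(\alpha)=r'(\alpha)s(\alpha)+r(\alpha)s'(\alpha)$ together with the positivity of $r(\alpha),s(\alpha)$ for closure under multiplication. You are simply more explicit than the paper in checking disjointness, the union condition, and non-emptiness.
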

    \proof
    It is clear that these sets are closed under addition since every derivation is a linear operator. Moreover, the Leibniz rule $$(r_1r_2)'(\alpha)=r_1'(\alpha)r_2(\alpha)+r_2'(\alpha)r_1(\alpha)$$ and the fact that $r_1(\alpha), r_2(\alpha)>0$ shows that these sets are also closed under multiplication. 
    \qed
    
    \medskip
    The same argument shows that $(H_+\cup H_0) \sqcup H_-$ and $H_+ \sqcup (H_0 \cup H_-)$ are 2-decompositions of $K^+$. 
    In Section \ref{sec5} we will show that these are the only ones.
    \begin{thm}\label{thm1valt} Let $\alpha$ be a transcendental element over $\mathbb{Q}$.
    All the possible non-trivial decompositions of $K^+=\Q(\alpha)^+$ into two pieces are the following:
    \begin{equation*}
    \begin{split}
    \Q(\alpha)^+ &=(H_+ \cup H_0 )\sqcup H_-,\\
    \Q(\alpha)^+ &=H_+ \sqcup (H_0 \cup H_-).
    \end{split}
    \end{equation*}
    \end{thm}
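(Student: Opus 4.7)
The plan is to reduce the problem to the polynomial ring $\mathbb{Q}[\alpha]^+$ and then lift the resulting description to the field of fractions, following the outline announced for Sections \ref{secdegree1} and \ref{sec5}. Let $\mathbb{Q}(\alpha)^+ = A \sqcup B$ be an arbitrary non-trivial 2-decomposition. First I would apply Theorem \ref{thmalgebraic} in the special case $a \in \mathbb{Q}$ to conclude that $\mathbb{Q}^+ = H_0$ is not 2-decomposable, so $H_0$ lies entirely in one of the two pieces; without loss of generality $H_0 \subseteq A$. It then suffices to prove that $B$ coincides with either $H_+$ or $H_-$.

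The core step is to analyze the restriction $A' \sqcup B'$ of the decomposition to $\mathbb{Q}[\alpha]^+$, where $A' = A \cap \mathbb{Q}[\alpha]^+$ and $B' = B \cap \mathbb{Q}[\alpha]^+$. Since $\alpha$ is transcendental, the assignment $p(\alpha) \mapsto (p(\alpha), p'(\alpha))$ embeds $\mathbb{Q}[\alpha]$ into $\mathbb{R}^2$ as a $\mathbb{Q}$-subspace in which $\mathbb{Q}[\alpha]^+$ occupies the open right half-plane and $H_0$ the positive horizontal ray. Addition acts by vector addition and scaling by $\mathbb{Q}^+$ acts by diagonal homothety, so $A'$ and $B'$ must be unions of open rays through the origin. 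The task is to show that the only splitting of $\mathbb{Q}[\alpha]^+$ into two invariant cones with $H_0$ contained in one part is the horizontal split given by the sign of $p'(\alpha)$. This is the separating hyperplane statement promised in the introduction; I would prove it by contradiction, assuming that both $f \in H_+ \cap B'$ and $g \in H_- \cap B'$ exist, and then constructing explicit positive rational combinations $\lambda f + \mu g$ landing in $H_0 \subseteq A$ which, together with multiplicative closure applied to products $f^i g^j$, force an inconsistency.

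The last step is to pass from $\mathbb{Q}[\alpha]^+$ back to $\mathbb{Q}(\alpha)^+$. Every $f \in \mathbb{Q}(\alpha)^+$ can be written as $p/q$ with $p, q \in \mathbb{Q}[\alpha]^+$; since $f \cdot q^2 \in \mathbb{Q}[\alpha]^+$, one compares the placement of $f$ against polynomials of known derivative sign by multiplying by $q^{2N}$ for suitable $N$ and using the identity $(p/q)'(\alpha) = (p'(\alpha)q(\alpha) - p(\alpha)q'(\alpha))/q(\alpha)^2$. The placement of $f$ is then dictated by the sign of $f'(\alpha)$, yielding exactly the two decompositions in the theorem.

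The main obstacle I anticipate is the core step: even after restricting to cones invariant under addition and rational scaling, there is a priori a wide family of candidate partitions of $\mathbb{Q}[\alpha]^+$, and ruling out everything except the horizontal split requires a careful combinatorial argument combining the additive and multiplicative data, most likely via a clever choice of auxiliary polynomials whose products and rational combinations cannot be placed consistently in $A'$ and $B'$.
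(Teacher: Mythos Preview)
Your overall architecture is reasonable, but the core step---classifying the invariant 2-decompositions of $\Q[\alpha]^+$---has a genuine gap. The map $p(\alpha)\mapsto (p(\alpha),p'(\alpha))$ is indeed an injective $\Q$-linear map into $\R^2$, but precisely because it is injective it is just a relabeling: the image is a complicated dense $\Q$-subspace of $\R^2$, and a priori the decomposition of this image need not be given by a single line through the origin. Your proposed contradiction, taking $f\in H_+\cap B'$ and $g\in H_-\cap B'$ and forming a rational combination $\lambda f+\mu g\in H_0$, requires $\lambda/\mu=-g'(\alpha)/f'(\alpha)$, and there is no reason for this real number to be rational. Passing to products $f^ig^j$ makes the logarithmic derivatives $i\frac{f'(\alpha)}{f(\alpha)}+j\frac{g'(\alpha)}{g(\alpha)}$ dense in $\R$, but you still cannot hit $0$ exactly with rational coefficients, so no element of $B'$ is forced into $H_0\subseteq A'$ this way. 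Moreover, even if you established $B'\subseteq H_+$ (say), you give no argument for the reverse inclusion $H_+\subseteq B'$, nor for the case $B'=\emptyset$ (i.e.\ $\Q[\alpha]^+\subseteq A$), which the paper treats separately.

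The paper's route is quite different and does not pass through your $2$-dimensional picture. It works in the full infinite-dimensional space $\R[x]$: for each $N$ it produces a separating hyperplane for $H_1\cap\Q[x]^{<N}$ and $H_2\cap\Q[x]^{<N}$, shows these hyperplanes are uniquely determined and compatible as $N$ grows, and deduces that the resulting hyperplane $\mathcal H\subset\R[x]$ (given by a coefficient sequence $(\lambda_i)$) is itself closed under multiplication. A short recursion then shows that such hyperplanes are parametrised by $(\lambda_2,\lambda_3)$ and fall into three explicit families $\mathcal H_{\beta,\gamma}$, $\mathcal H_\delta$, $\mathcal H_{z,\bar z}$; finally a direct check rules out all but $\mathcal H_\alpha$. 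The lifting to $\Q(\alpha)^+$ is also more concrete than your $q^{2N}$ sketch: given $p_1/p_2$ with $p_1,p_2$ on the same side, the paper finds a linear polynomial $l(x)=ax+b$ with $l(\alpha)>0$ such that $(p_1l)'(\alpha)$ and $(p_2l)'(\alpha)$ have opposite signs, which immediately determines the class of $p_1/p_2=(p_1l)/(p_2l)$. Your $q^{2N}$ idea does not obviously yield this, since knowing where $f\cdot q^{2N}=p q^{2N-1}$ lies does not by itself locate $f$ when $f$ and $q^{2N}$ may sit in different pieces.
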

    
    Now we generalise the previous construction to finite transcendental extensions. Let $K=\mathbb{Q}(\alpha_1, \ldots , \alpha_m)$, where $\alpha_i\in  \R ~ (i=1, \dots, m)$ are algebraically independent over $\Q$ and $m\ge 2$. The $m$-tuple $(\alpha_1, \ldots , \alpha_m )$ will also be denoted by $\underline{\alpha}$. Now $K$ can be identified with $\mathbb{Q}(x_1, \ldots, x_m)$, and $K^+$ denotes the elements $r\in \Q(x_1, \ldots, x_m)$ such that $r(\underline{\alpha})>0$. Let $dx_i$ denote the partial derivative according to $x_i$ on $K$, i.e.: 
    $$ d x_i (r)=\frac{\partial r(x_1, \ldots, x_m)}{\partial x_i} ~~ \forall r \in K,$$ and let $D=l(dx_1,dx_2, \ldots , dx_m)$ be a linear combination of partial derivatives with real 
    coefficients.
    Clearly, $D$ acts on $\mathbb{Q}(x_1,x_2, \ldots, x_m)$ as a derivation\footnote{A derivation $d:K\to \C$ is additive (i.e. $d(x+y)=d(x)+d(y)$) and satisfies the Leibniz rule, i.e. $d(xy)=xd(y)+yd(x)$}. Using an argument similar to the proof of Proposition \ref{lemSeCo1} one obtains decompositions into two or three pieces using the following sets which are invariant under addition and multiplication: 
    \begin{equation}\label{eqparts}
    \begin{split}
    H_{D,+,l} &=\left\{ r(\underline{\alpha}) \in K^{+} \mid D(r)( \underline{\alpha} ) > 0 \right\} ,\\
    H_{D,0,l} &=\left\{ r(\underline{\alpha}) \in K^{+} \mid D(r)( \underline{\alpha} )  = 0 \right\} ,\\
    H_{D,-,l} &=\left\{ r(\underline{\alpha}) \in K^{+} \mid D(r)( \underline{\alpha} )  < 0 \right\} .
    \end{split}
    \end{equation}
    \begin{prop}\label{propnresz}
    Let $\alpha_1 \stb \alpha_m$ be algebraically independent over $\Q$, where $m\ge 2$.  
    For every $2 \le n \in \N$ there exist infinitely many different ways to nontrivially decompose $K \cong \Q(\alpha_1,\dots,\alpha_m)$ into $n$ pieces. 
    \end{prop}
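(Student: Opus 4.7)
The plan is to pull back suitable partitions of $\R^2$ into convex cones along a gradient-type map. Define
\[
\Phi : K^+ \to \R^2, \qquad \Phi(r) = \Bigl( \tfrac{\partial r}{\partial x_1}(\underline{\alpha}),\; \tfrac{\partial r}{\partial x_2}(\underline{\alpha}) \Bigr),
\]
which makes sense because $m \geq 2$. The Leibniz rule yields $\Phi(r+s) = \Phi(r) + \Phi(s)$ and $\Phi(rs) = r(\underline{\alpha})\,\Phi(s) + s(\underline{\alpha})\,\Phi(r)$, the latter a \emph{strictly positive} linear combination since $r(\underline{\alpha}), s(\underline{\alpha}) > 0$. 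Consequently, for any convex cone $C \subset \R^2$ the preimage $\Phi^{-1}(C)$ is closed under addition and multiplication.

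For each $n \geq 2$ I would partition $\R^2$ into exactly $n$ convex cones and take preimages under $\Phi$. For $n = 2$, use two complementary half-planes, carefully distributing the separating line so that each half is a bona fide convex cone; the direction of the line is a free continuous parameter. For $n \geq 3$, take $\{0\}$ as one cone and choose $n - 1$ rays from the origin that bound $n - 1$ sectors of angular widths $\leq \pi$, attaching each separating ray to one of the two adjacent sectors; a half-open sector of angular width at most $\pi$ is convex, hence a convex cone. The angular positions of the rays form a continuous multi-parameter family of partitions.

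Non-emptiness of each piece follows from the observation that for every $(v_1, v_2) \in \Q^2$ one can pick $c \in \Q^+$ large enough that $r := c + v_1 \alpha_1 + v_2 \alpha_2 \in K^+$, and then $\Phi(r) = (v_1, v_2)$. Since $\Q^2$ is dense in $\R^2$, every open cone in the partition contains a rational vector and hence has non-empty preimage, while $\Phi^{-1}(\{0\}) \supseteq \Q^+$. Two different cone partitions produce different decompositions of $K^+$: their symmetric difference contains an open region, which contains a rational vector, and the corresponding linear element is placed in different pieces of the two decompositions.

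The main obstacle, although routine, is the small-$n$ bookkeeping: for $n = 2$ one must split the separating line between the two halves (e.g.\ $C_1 = \{(x,y) : y > 0\} \cup \{(x,0) : x \geq 0\}$ and $C_2 = \R^2 \setminus C_1$), and for $n = 3$ the two opposite rays of the dividing line must similarly be distributed to the two half-planes; a direct verification confirms that the resulting half-open cones are closed under positive linear combinations. Once this is in place, the continuous family of angle parameters immediately supplies infinitely many distinct $n$-decompositions of $K^+$.
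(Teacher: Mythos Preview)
Your proof is correct and rests on the same core observation as the paper's: the gradient-type map built from partial derivatives sends products in $K^+$ to strictly positive linear combinations, so preimages of convex cones are invariant. The execution, however, is organized differently. The paper fixes $n-1$ pairwise linearly independent derivations $l_1,\dots,l_{n-1}$ (each a real linear combination of all $\partial/\partial x_i$) and builds the decomposition \emph{iteratively}: start with the half-space cut from $l_1$, then at step $j$ find some existing piece that the half-space from $l_j$ splits nontrivially, arguing informally that linear independence of the $l_j$ guarantees such a piece exists. You instead collapse the whole construction into a single pullback of a ready-made partition of $\R^2$ into $n$ convex cones along $\Phi=(\partial_1,\partial_2)$, using only two of the variables. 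Your packaging makes the bookkeeping for non-emptiness and for ``infinitely many distinct decompositions'' cleaner, since both reduce to the density of $\Q^2$ in $\R^2$; the paper's iterative formulation, on the other hand, is phrased directly in terms of individual derivations, which dovetails with the subsequent step of extending each derivation from $K$ to $\R$. Your device of isolating $\{0\}$ as its own cone for $n\ge 3$ is a mild shortcut that the paper does not take.
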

    \proof 
    Every derivation $D$ is a linear combination $l$ of partial derivatives $dx_i$. The derivation $D$ implies a 2-decomposition of $K^{+}$ as $K^{+}= \left( H_{D,+,l} \cup H_{D,0,l} \right) \sqcup H_{D,-,l}$. Our strategy is to take finitely many derivations such that no two of them are multiples of each other and take one of the two pieces as above for each of them. Then the intersection of these pieces with careful choice of the sign of the derivation will be a nonempty invariant set. 
    
    Now we can construct infinitely many different decompositions into $n$ pieces by taking infinitely many different sets of $n-1$ pairwise linearly independent derivations $l_1$, $\dots$, $l_{n-1}$. As an initial step  we consider the decomposition $K^+=\left( H_{D,+,l_1} \cup H_{D,0,l_1} \right) \sqcup H_{D,-,l_1}$. Then we iteratively cut one of the pieces of the previous step into two pieces. More precisely, in the $j^{th}$ step for $j\in \{1,\dots,n-1\}$ we created the pieces $H_1, \dots H_{j+1}$. Now we take $\left( H_{D,+,l_j} \cup H_{D,0,l_j} \right)$ and $ H_{D,-,l_j}$ that are  invariant sets. Clearly, there is a set $H_k$ for some $1<k<j+1$ such that $H_k \cap (H_{D,+,l_j} \cup H_{D,0,l_j})$ and $H_k\cap H_{D,-,l_j}$ are nonempty and invariant sets, as they are the intersection of invariant sets. The existence of such $H_k$ is guaranteed by the fact that $l_j$ for $j\in \{1,\dots,n-1\}$ are pairwise linearly independent and $\cup_{i=1}^{j+1} H_i=K^+$. Then instead of $H_k$ we add $H_k \cap (H_{D,+,l_j} \cup H_{D,0,l_j})$ and $H_k\cap H_{D,-,l_j}$
    to the system of invariant sets $H_i$ ($i\ne k$) increasing the number of pieces by one. \qed
    
    \medskip
    Our first goal is to find a solution so that we can extend the previously obtained decompositions of $\mathbb{Q}(\underline{\alpha})^+$ to $\mathbb{R}^+$.  
     Now we deal with the problem of extension of a derivation obtained from Proposition \ref{lemSeCo1}.
    Indeed, let us assume that there exists a nontrivial decomposition of the positive elements of a finitely generated field $K\subset \R$ over $\Q$, where the pieces are determined by inequalities involving derivations in the manner of \eqref{eqparts} and Proposition \ref{propnresz}. 
    
    The idea is that extending (not necessarily uniquely) these derivations to $\R$ automatically gives a partition of $\R^+$ and the number of pieces stays the same as for the decomposition of $K^+$.
    Indeed, let $K$ be an arbitrary field over $\Q$. Let $a$ be an algebraic element over $K$. Then by \cite[Lemma 14.2.2]{K}, a derivation on $K$ extends uniquely to $K(a)$. Let $S\subset K$ be a set of algebraically independent elements over $K$. \cite[Lemma 14.2.3]{K} shows that if $d : K \to \R$ is a derivation, and $u :S \to \R$ is an arbitrary function, then there exists a unique derivation $\hat{d} : F(S) \to \R$
    such that $\hat{d} |_K = d$ and $\hat{d} |_S = u$.
    
    Let $D_i$ be derivations on $K= \mathbb{Q}(t_1, \dots, t_m)~ (m\ge 2)$ determined by $l_i$ which give a decomposition into $n$ parts. The previously discussed extensions imply that every derivation $D_i$ from a subfield $K$ of $\R$ can be extended to $\R$. A standard transfinite recursion verifies the existence of the extensions, which proves Theorem \ref{thmRszetvag}. 
    
    Let $H_{D_i,+,l_i}, H_{D_i,0,l_i}, H_{D_i,-,l_i}$ be defined by \eqref{eqparts} where $K=\mathbb{R}$. The existence of these extended derivations immediately imply the existence of a nontrivial decomposition of $\R^+$ into $n$ pieces. Note that no new pieces were constructed. On the other hand, if there is a decomposition of $\R^+$ into $n$ pieces, defined by some of the conditions \eqref{eqparts},
    then there exists a field $K$ which is finitely generated as a field over $\Q$ and the intersection of any of the $n$ pieces with $K^+$ is non-empty. Indeed, take an element from each piece of the decomposition of $\R^+$, denote them by $p_1, p_2, \dots, p_n$ and take the field $K= \Q(p_1, p_2, \dots, p_n)$, which is finitely generated. 
    
    Given that for any countable cardinal $k$ we constructed $k$-decompositions of a finitely generated field $K$ over $\Q$ of transcendence degree at least 2, it is natural to ask into what those decomposition pieces can look like.
    We believe that the pieces of each finite decomposition can be determined using a sequence of derivations. First of all, we conjecture that in the case of finite decompositions the pieces as $\Q$-convex sets are separated by separating hyperplanes, which are closed under multiplication (and addition) but we should say a bit more. The separating hyperplanes can also contain elements of the field so if we would like to properly describe the decompositions, then we have to specify what happens on these hyperplanes (whose existence we only conjecture). We believe that the partition on these hyperplanes is also determined by derivations defined on these one codimensional subspaces giving rise to $2$ codimensional separating hyperspaces.
    This process can be continued in order to obtain a proper description for finitely many pieces.
    
    \section{Elementary observations}\label{sec3}
    In this section we prove a technical lemma, which will be used during the process when we determine all $2$-decompositions of some extensions of the field of rational numbers.
    As a consequence of this lemma we immediately get that for every non-trivial decomposition into finitely many pieces, the pieces cannot all be measurable. 
    
    Let $K \subseteq \mathbb{R}$ be a field. Clearly, $K$ contains $\mathbb{Q}$ and let us assume that we have a decomposition $K^+=H_1 \sqcup H_2 \sqcup \ldots \sqcup H_{\lambda}$ into disjoint invariant pieces. 
    It is easy to see that each part of the decomposition is closed under multiplication by positive integers and therefore also under multiplication by positive rational numbers. The following lemma gives more information about the decomposition.
    \begin{lemma}\label{lemraceltol}
    \begin{enumerate}[(a)]
    \item Let us assume that $a, a+q \in H_1$, where $q \in \mathbb{Q}^{+}$. Then every element of the form $a+r$ with $r \in \mathbb{Q}^{+}$ is also in $H_1$.
    \item If $\lambda$ is finite, then for every $a \in K^+$ and $a+q>0$, where $q \in \mathbb{Q}$, we have that $a$ and $a+q$ are in the same part of the decomposition.
    \end{enumerate} 
    \end{lemma}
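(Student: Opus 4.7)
To prove part (a), I would begin with the observation, implicit in the text, that each $H_i$ is closed under multiplication by positive rationals: closure under addition gives $nh \in H_i$ for $h \in H_i$, and from $m \cdot (h/m) = h \in H_i$ one concludes $h/m \in H_i$ by disjointness of the pieces. Given $a, a+q \in H_1$, the combinations $\mu a + \nu(a+q) = (\mu+\nu)a + \nu q \in H_1$ with $\mu,\nu \in \Q_{\ge 0}$ not both zero, rescaled by $1/(\mu+\nu)$, yield $a + sq \in H_1$ for every $s \in [0,1] \cap \Q$, settling the case $r \in (0, q]$.

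For $r > q$, additive manipulations alone do not suffice, so I would exploit multiplicative closure via the identity
\[
(a+r)(a+q)^k \;=\; (a+q)^{k+1} + (r-q)(a+q)^k,
\]
whose right-hand side, for $k \ge 1$, is a sum of positive-rational multiples of products of $H_1$-elements and hence lies in $H_1$. Supposing $a + r \in H_j$ with $j \ne 1$, closure of $H_j$ gives $(a+r)^n \in H_j$ for every $n \ge 1$; on the other hand, expanding
\[
(a+r)^n = \sum_{k=0}^n \binom{n}{k}(a+q)^{n-k}(r-q)^k
\]
exhibits $(a+r)^n$ as the sum of a visibly-$H_1$ element (the $k \le n-1$ terms) and the rational remainder $(r-q)^n$, which lies in the common part $H_{j_0}$ containing $\Q^+$. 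Comparing with the parallel decomposition of $(a+q)^n \in H_1$ and reapplying the convex-combination step to the two rational gaps $q^n$ and $(r-q)^n$ should pin down $a + r \in H_1$.

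For part (b), I may assume $q > 0$. The finiteness of $\lambda$ lets me apply the pigeonhole principle to the infinite sequence $\{a + nq\}_{n \ge 0}$ to obtain integers $0 \le m < n$ with $a+mq$ and $a+nq$ in a common part $H_k$. Part (a) applied to this pair, with rational increment $(n-m)q$, gives $a + mq + s \in H_k$ for every $s \in \Q^+$, so the tail $\{a + t : t \in \Q, \ t > mq\}$ lies in $H_k$. Iterating the pigeonhole with finer steps $q/N$ and using that the indices arising from pigeonhole are bounded by $\lambda$, one drives the tail threshold to zero, so $a + t \in H_k$ for every $t \in \Q^+$. A final pigeonhole on $\{a\} \cup \{a+q/N,\dots,a+\lambda q/N\}$ must then pair $a$ with some $a + jq/N$ in a common part (else $\{a+jq/N\}$ alone occupies $\lambda$ parts and forces a coincidence that, via (a), collapses back to $H_k$ without $a$, contradicting uniqueness of the tail); applying (a) to this coincidence gives $a \in H_k$, and hence $a$ and $a + q$ lie in the same part.

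The principal obstacle is the multiplicative-extension step in (a): passing from the easy range $r \le q$ to general $r \in \Q^+$ requires careful use of the identity above together with the interplay between the $H_1$- and $H_{j_0}$-parts of the binomial expansions. Once (a) is in hand, (b) follows from finite pigeonhole plus a short iteration.
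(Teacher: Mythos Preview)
Your argument for part~(a) has a real gap in the extension to $r>q$. The identity $(a+r)(a+q)^k=(a+q)^{k+1}+(r-q)(a+q)^k$ does place $(a+r)(a+q)^k$ in $H_1$, but products across different pieces are not controlled, so this says nothing about which piece contains $a+r$ itself. The binomial expansion then gives $(a+r)^n=S_1+(r-q)^n$ with $S_1\in H_1$; when $\Q^+\subset H_1$ this finishes at once, but when $\Q^+$ lies in some other piece your ``comparison with the parallel decomposition of $(a+q)^n$'' does not close the gap. Following it through, one obtains only that some $U_1$ and $U_1+q^n$ both lie in $H_1$, while the desired conclusion is $U_1+r^n\in H_1$ --- exactly part~(a) again, now with the worse ratio $r^n/q^n>r/q$.

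The paper bypasses all of this with a single identity: $(a+2s)^2=a^2+4s(a+s)$. If $a,a+s\in H_1$ then the right side is a sum of two $H_1$-elements, so $(a+2s)^2\in H_1$; since $a+2s$ lies in some piece and its square must lie in the same one, disjointness forces $a+2s\in H_1$. Inducting gives $a+ns\in H_1$ for all $n$, and rational convexity fills in the rest.

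For part~(b), your pigeonhole-and-refine scheme correctly produces a single piece $H_k$ containing $a+t$ for every rational $t>0$, but the ``final pigeonhole'' does not force $a$ into $H_k$: once you know all the $a+jq/N$ already lie in $H_k$, the pigeonhole on $\{a\}\cup\{a+jq/N:1\le j\le\lambda\}$ is trivially satisfied by any two of the shifted elements and says nothing about $a$. A clean repair is to rerun the tail argument starting from $a-q_0$ for some rational $q_0\in(0,a)$; its tail contains both $a$ and $a+1$, hence coincides with $H_k$, and therefore $a\in H_k$.
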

    \begin{proof}
    \begin{enumerate}[(a)]
    \item It follows from the previous observations that $H_1$ is closed under convex combinations with rational weights, which gives the result for $0<r<q$.
    It also shows that we only have to show that there exist a sequence of rational numbers $q_i$ tending to infinity such that $a+q_i$ are all in $H_1$.
    
    Now $(a+2r)^{2}=a^{2}+4 (a+r) r$, showing that $(a+2r)^2 \in H_1$ whenever $a$ and $a+r \in H_1$ as well. It follows that $a+nr$ is also in $H_1$ for every $n\in \mathbb{N}$.
    \item Easy consequence of the previous case and the pigeonhole principle.
    \end{enumerate}
    \end{proof}
    
    As a corollary we obtain the following. 
    
    \begin{cor}\label{cor1deg} For any finite decomposition of $\mathbb{Q}(\alpha)^{+}$, the polynomials of degree $1$ can only be cut into one or two pieces. If there are two pieces, then the membership is determined by the sign of the (leading) coefficient of the monomial of degree $1$.
    \end{cor}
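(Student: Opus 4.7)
The plan is to show that the piece of the decomposition containing a positive degree-one polynomial $c\alpha + d$ depends only on the sign of the leading coefficient $c \in \mathbb{Q}\setminus\{0\}$. This immediately splits the set of positive degree-one polynomials into at most two classes, giving exactly the dichotomy the corollary asserts.

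To carry this out, first I would invoke Lemma \ref{lemraceltol}(b), which says that any two positive elements of $\mathbb{Q}(\alpha)^+$ differing by a rational number belong to the same piece of any finite decomposition. I would combine this with the observation recorded just before the lemma that each piece $H_i$ is closed under multiplication by positive rationals. Given two positive degree-one polynomials $P_1 = c_1\alpha + d_1$ and $P_2 = c_2\alpha + d_2$ with $c_1, c_2$ of the same sign, set $\lambda = c_2/c_1 \in \mathbb{Q}^+$. Then $\lambda P_1 = c_2\alpha + \lambda d_1$ is positive and lies in the same piece as $P_1$ by the scaling property, while $\lambda P_1$ and $P_2$ differ by the rational number $\lambda d_1 - d_2$ with both being positive, so Lemma \ref{lemraceltol}(b) places them in the same piece. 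Chaining the two steps shows $P_1$ and $P_2$ belong to the same piece.

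It follows that all positive degree-one polynomials with positive leading coefficient lie in a single piece $H^+$, and all positive degree-one polynomials with negative leading coefficient lie in a single piece $H^-$; these two pieces may coincide or be distinct, yielding the one-or-two alternative of the corollary. There is no real obstacle here: the argument is essentially a bookkeeping combination of Lemma \ref{lemraceltol}(b) with closure under positive rational scaling, the only thing to check being that the intermediate element $\lambda P_1$ stays positive, which is immediate from $\lambda > 0$ and $P_1 > 0$.
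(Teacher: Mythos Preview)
Your proof is correct and is exactly the argument the paper intends: the corollary is stated immediately after Lemma~\ref{lemraceltol} with no separate proof, and your combination of closure under positive rational scaling with Lemma~\ref{lemraceltol}(b) is the natural way to unpack it.
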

    
    As another consequence of the lemma we prove the following result.
    \begin{thm}\label{thm:32}
    Let $\R^+ =H_1\sqcup  \ldots \sqcup  H_k$ be a decomposition  of $\R^+$ with $k \ge 2$. Then at least one of the pieces is not Lebesgue measurable. 
    \end{thm}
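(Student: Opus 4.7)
I would argue by contradiction: assume that every $H_i$ is Lebesgue measurable. Since $\R^+$ has infinite Lebesgue measure and is partitioned into finitely many measurable pieces, at least one has positive (in fact, infinite) measure; after relabeling, assume this is $H_1$. The plan is to show that $H_1$ must contain an entire half-line $(N,\infty)$, while every other $H_j$ must contain arbitrarily large elements by closure under addition; the two observations together yield the desired contradiction.

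The key technical step is the Steinhaus-type statement that for any Lebesgue measurable set $E \subseteq \R$ of positive measure, $E + E$ contains an open interval. One way to see this is to truncate to $E' = E \cap [-M,M]$ with $\mu(E') > 0$ and observe that the convolution $\chi_{E'} \ast \chi_{E'}$ is continuous, nonnegative, and has strictly positive integral $\mu(E')^2$, hence is strictly positive on a nonempty open set; each point $x$ of that open set can be written as $e_1 + e_2$ with $e_1, e_2 \in E'$. Applied to $E = H_1$ and combined with $H_1 + H_1 \subseteq H_1$, this produces an interval $(u,v) \subseteq H_1$ with $0 < u < v$.

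Iterating additive closure gives $H_1 \supseteq (nu, nv)$ for every $n \in \N$. As soon as $n > u/(v-u)$, the consecutive intervals $(nu, nv)$ and $((n{+}1)u, (n{+}1)v)$ overlap, so $H_1 \supseteq (N,\infty)$ for some $N > 0$. Now for any $j \ne 1$ pick $a_j \in H_j$ (nonempty); by additive closure $na_j \in H_j$ for every $n \in \N$, and whenever $n > N/a_j$ we get $na_j \in (N,\infty) \subseteq H_1$, contradicting $H_j \cap H_1 = \emptyset$. The only non-routine ingredient is the Steinhaus-type interval-in-sumset fact; notably, neither Lemma \ref{lemraceltol} nor multiplicative closure nor any ergodicity argument of $\Q$ on $\R/\Z$ is needed for this theorem.
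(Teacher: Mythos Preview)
Your proof is correct and takes a genuinely different route from the paper's. Both arguments start by assuming all pieces are measurable, picking one of positive measure, and invoking Steinhaus to produce an interval inside it via $H_1+H_1\subseteq H_1$. At that point the paper appeals to Lemma~\ref{lemraceltol} to conclude that \emph{every} piece is dense in $\R^+$, which immediately contradicts the existence of an interval inside $H_1$. You instead iterate the additive closure on the interval $(u,v)$ to obtain $(nu,nv)\subseteq H_1$ and hence a whole half-line $(N,\infty)\subseteq H_1$, and then derive the contradiction from the elementary fact that any nonempty additively closed $H_j$ contains the unbounded set $\{na_j:n\in\N\}$.

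The trade-off is this: the paper's argument is a touch shorter once Lemma~\ref{lemraceltol} is in hand, but that lemma relies on multiplicative closure (through the identity $(a+2r)^2=a^2+4(a+r)r$). Your argument uses only closure under addition, so it actually proves the stronger statement that any partition of $\R^+$ into finitely many nonempty \emph{additively} closed sets has a non-measurable piece. Your explicit remark that neither Lemma~\ref{lemraceltol} nor multiplicative closure is needed is therefore on point.
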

    \begin{proof}
    Let us assume that every piece is Lebesgue measurable. Then one of them has to be of positive measure.
    It follows from Lemma \ref{lemraceltol} that each piece is dense in $\R^{+}$ and for each piece $H_i$ we have $H_i+H_i=\left\{h+h' \mid h,h' \in H_i \right\} \subseteq H_i$. Steinhaus's theorem \cite{steinhaus} implies that if a set $H \subseteq \R$ is of positive measure, then $H+H$ contains an interval. Therefore $H_i$ contains an interval, which contradicts the fact that the other pieces are dense.
    \end{proof}
    \begin{observation}
    \begin{enumerate}
    \item 
    A similar proof would show that at least one of the pieces is not Borel using Piccard's theorem but this simply follows from the fact that one of the pieces is not Lebesgue measurable. 
    \item As a strengthening of Theorem \ref{thm:32} we obtain that if a set in the decomposition is measurable, then it is of measure zero. 
    \item Similarly, every Borel set appearing in the decomposition is of first category by Piccard's theorem, which says for a Borel set $A$ of second category we have that $A+A$ contains an interval, see \cite{kechris}.   
    \end{enumerate}
    \end{observation}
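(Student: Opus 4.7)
The plan is to dispatch each of the three items in the observation by adapting the proof of Theorem \ref{thm:32}. The crucial ingredient, already present in that proof, is that by Lemma \ref{lemraceltol}(b) each piece $H_i$ of the decomposition is closed under adding arbitrary positive rationals, and hence is dense in $\R^+$.

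For item (2), I would argue by contradiction: assume some $H_i$ is Lebesgue measurable with positive measure. Since $H_i+H_i\subseteq H_i$, Steinhaus's theorem yields that $H_i$ contains a nondegenerate open interval $I$. But every other piece $H_j$ is dense in $\R^+$, hence meets $I$, contradicting disjointness. Thus any measurable piece has Lebesgue measure zero. For item (1), if every piece were Borel then every piece would be Lebesgue measurable, and by item (2) each would have Lebesgue measure zero; their finite union would then also have measure zero, contradicting the fact that $\R^+$ has infinite measure. (Alternatively, this is immediate from Theorem \ref{thm:32} since every Borel set is Lebesgue measurable.) For item (3), one repeats the argument of item (2) with Steinhaus's theorem replaced by Piccard's theorem: for any Borel set $A\subseteq\R$ of the second Baire category, $A+A$ contains an interval. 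Hence a Borel piece of the second category would contain an interval, again contradicting density of the other pieces.

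The main obstacle, if one can call it that, is verifying the density of each piece, which is immediate from Lemma \ref{lemraceltol}(b). Beyond this, the three items are parallel applications of the same three-step scheme: (i) an algebraic closure property ($H_i+H_i\subseteq H_i$), (ii) a regularity principle producing an interval inside $H_i$ (Steinhaus for measure, Piccard for category), and (iii) the density of the complement piece giving the final contradiction. No new technical ingredient beyond the proof of Theorem \ref{thm:32} is required.
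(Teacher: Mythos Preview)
Your proposal is correct and follows essentially the same approach as the paper: the Observation is stated without a separate formal proof, and the reasoning it hints at---density of each piece via Lemma~\ref{lemraceltol}(b), then Steinhaus (for measure) or Piccard (for category) to force an interval inside any ``large'' piece, contradicting that density---is exactly what you have written out. Your alternative for item~(1), deducing it directly from Theorem~\ref{thm:32} via ``Borel $\Rightarrow$ Lebesgue measurable,'' is also precisely what the paper indicates.
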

    Note that the proof of \cite[Lemma 2.2]{elek} applied for $\R^+$ would actually give a similar result.
    
    
    \section{One variable polynomial ring}\label{secdegree1}
    In this section we investigate the decompositions of the ring extension $\Q[\alpha]$, where $\alpha\in\R$ is transcendental. Our arguments will work in slightly greater generality so as to be applicable in the case of an algebraic $\alpha$ as well, and to make them clearer we introduce the following notation:
    \[
    \Q[x]^+_{x=\alpha} = \{p\in\Q[x] \mid p(\alpha) >0 \}.
    \]
    Note that in the case of transcendental $\alpha$ this is just a relabeling of $\Q[\alpha]^+$, and we will consider them to be identified in the natural way.
    
    We show that for any real $\alpha$ there are only two different decompositions of $\Q[x]^+_{x=\alpha}$ into $2$ nonempty pieces.
    \begin{prop}\label{propegyvaltpolinom}
    Let $\alpha \in \R$ and let $\Q[x]^{+}_{x=\alpha}
    =H_1 \sqcup  H_2$ be a decomposition into two invariant pieces. 
    Then
    \begin{equation*}
    \begin{aligned}
    H_1&\subset(H_+ \cup H_0) \text{ and } H_2\subset (H_- \cup H_0), &\phantom{=} \text{ or}\\
    H_1&\subset(H_- \cup H_0) \text{ and } H_2\subset(H_+\cup H_0), &\phantom{=} \phantom{\text{ or}}
    \end{aligned}
    \end{equation*}
    where
    \begin{equation*}
    \begin{split}
    H_+=\{ p\in \Q[x]^+_{x=\alpha} \mid p'(\alpha) >0 \},\\
    H_0=\{ p\in \Q[x]^+_{x=\alpha} \mid p'(\alpha) =0 \},\\
    H_-=\{ p\in \Q[x]^+_{x=\alpha} \mid p'(\alpha) <0 \}.\\
    \end{split}
    \end{equation*}
    \end{prop}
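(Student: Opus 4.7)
The first move is to locate the positive constants: since each $H_i$ is closed under multiplication by positive rationals (as observed at the start of Section~\ref{sec3}), the invariant set $\Q^+$ lies entirely in one piece; after a relabeling assume $\Q^+\subset H_1$. Transplanting the reasoning of Corollary~\ref{cor1deg} to $\Q[x]^+_{x=\alpha}$ --- Lemma~\ref{lemraceltol}(b) together with positive-rational scaling shows that the piece of $ax+b$ depends only on $\mathrm{sgn}(a)$ --- leaves exactly two possibilities for the degree-$1$ polynomials: (A) they all lie in $H_1$, or (B) they are split by the sign of the leading coefficient.

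To rule out Case~(A), I would show by induction on $\deg p$ that every $p\in\Q[x]^+_{x=\alpha}$ is in $H_1$, contradicting non-triviality. Factor $p$ over $\Q$ into irreducibles and sign-normalize so each irreducible factor $q$ satisfies $q(\alpha)>0$. Linear factors lie in $H_1$ by hypothesis. For an irreducible $q$ of degree $\ge 2$: if $q>0$ on all of $\R$, a sum-of-squares representation (via Pourchet's theorem, after clearing denominators) writes $q$ in terms of squares of polynomials of strictly smaller degree, each of which (after a sign flip to have positive value at $\alpha$) lies in $H_1$ by induction; if $q$ has a real root, choose a rational $\gamma$ in the positivity component of $q$ containing $\alpha$ with $0<q(\gamma)<q(\alpha)$, write $q=q(\gamma)+(x-\gamma)s(x)$ with $\deg s<\deg q$ and $s(\alpha)>0$, and apply induction to $s$.

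In Case~(B), after possibly swapping $H_1$ and $H_2$, the linear polynomials with positive leading coefficient lie in $H_1$ and those with negative leading coefficient lie in $H_2$, and the proposition reduces to the implications $p\in H_+\Rightarrow p\in H_1$ and $p\in H_-\Rightarrow p\in H_2$. I plan to establish these by induction on $\deg p$; the base case is Corollary~\ref{cor1deg}. For $p\in H_+$ of degree $n\ge 2$, decompose $p(x)=p(\gamma)+(x-\gamma)s(x)$ for a rational $\gamma<\alpha$ with $p(\gamma)>0$, $s(\alpha)>0$, and $s'(\alpha)>0$; then the inductive hypothesis places $s$ in $H_1$, $(x-\gamma)$ lies in $H_1$ from the base case, $p(\gamma)\in\Q^+\subset H_1$, and $p$ is exhibited as a sum of $H_1$-elements.

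The main obstacle is forcing $s'(\alpha)>0$: the limit $\gamma\to\alpha^-$ only gives $s'(\alpha)\to p''(\alpha)/2$, which can be nonpositive. My plan to handle this uses two tools. First, observe that $p$ and any power $p^k$ lie in the same piece of the decomposition (since $H_1,H_2$ are closed under multiplication), so it suffices to prove $p^k\in H_1$; and by the identity $(p^k)''(\alpha)=k\,p(\alpha)^{k-2}\bigl((k-1)p'(\alpha)^2+p(\alpha)p''(\alpha)\bigr)$, the second derivative of $p^k$ at $\alpha$ is positive for all $k$ sufficiently large, so the Taylor-style decomposition of $p^k$ around a nearby rational $\gamma$ produces an $s$ with $s'(\alpha)>0$. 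Second, use the symmetric decomposition with $\gamma>\alpha$ and the factor $(-x+\gamma)\in H_2$ when the signs call for it. Making these manipulations compatible with the degree induction --- managing that $\deg(p^k)=kn$ grows with $k$ rather than staying below $n$ --- is the delicate technical point on which the proof will turn.
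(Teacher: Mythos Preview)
Your treatment of Case~(A) reaches the right conclusion --- it is precisely the content of the paper's Lemma~\ref{lem:1foku} --- though the detour through Pourchet's theorem is unnecessary and, for algebraic $\alpha$, has a small wrinkle (some square $p_i^2$ may evaluate to $0$ at $\alpha$). The paper argues more simply: for $g$ of degree $n$ with, say, $LC(g)>0$, subtract $h=LC(g)\prod_j(x-\beta_j)$ with rational $\beta_j$ near $\alpha$ chosen so that $0<h(\alpha)<g(\alpha)$, and induct on $g-h$.

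Case~(B) is where the real gap lies, and it is not merely ``delicate''. For any quadratic $p(x)=Ax^2+Bx+C$ the divided difference is $s(x)=A(x+\gamma)+B$, so $s'(\alpha)=A$ \emph{independently of $\gamma$}; whenever $p''(\alpha)<0$ --- and such $p$ with $p(\alpha)>0,\ p'(\alpha)>0$ abound --- \emph{no} choice of $\gamma$ makes $s\in H_+$, and your decomposition $p=p(\gamma)+(x-\gamma)s$ cannot place $p$ in $H_1$. The symmetric variant with $\gamma>\alpha$ yields a product $(\gamma-x)\cdot(-s)$ of an $H_2$ factor and an $H_1$ factor, which is uncontrolled. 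Your $p^k$ repair does force $(p^k)''(\alpha)>0$ eventually, but the divided difference of $p^k$ then has degree $kn-1\ge n$, so the induction hypothesis is silent about it; I see no rearrangement of the induction that closes this loop. The paper avoids degree induction altogether here: it assembles from the finite-dimensional truncations $\Q[x]^{<N}$ a single separating hyperplane $\mathcal H\subset\R[x]$ for $H_1,H_2$, shows that $\mathcal H$ is closed under multiplication, classifies all such hyperplanes via a second-order recursion on their defining coefficients (Lemma~\ref{lemelvalasztohipersik}), and then checks that only $\mathcal H_\alpha=\{p:p'(\alpha)=0\}$ has both open half-spaces invariant in $\Q[x]^+_{x=\alpha}$ (Lemma~\ref{lemnemzart}).
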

    In the case of transcendental $\alpha$ the pieces coincide with the ones listed in Theorem \ref{thm1valt}, since $H_0=\Q^+$ and Lemma \ref{lemraceltol} ensures that it must belong to either $H_1$ or $H_2$ in its entirety.
    
    We introduce some notation concerning the topology of $\Q[x]$ that we use in our investigation. 
    We denote by $\Q[x]^{<N}$ the set of polynomials over $\Q$ of degree smaller than $N$ and by $\Q[x]^{N}$ the set of polynomials over $\Q$ of  degree exactly $N$. 
    The  vector space $\Q[x]^{<N}$  is $N$ dimensional over $\Q$, and as a topological space can 
    naturally be identified with $\Q^N$, making it a metric space as well.
    Using the natural topology induced by the metric, it is clear that the addition is a continuous function from $\Q[x]^{<N} \times \Q[x]^{<N}$ to $\Q[x]^{<N}$. Similarly, the multiplication is a continuous map from $\Q[x]^{<N} \times \Q[x]^{<N}$ to $\Q[x]^{<2N-1}$.
    
    
    Using a suitable fixed ordering of the coordinates the elements of $\Q[x]$  can be identified with infinite sequences of rational numbers that contain only a finite number of nonzero coordinates. This space can be equipped with the distance $d\big((a_i),(b_i)\big)=\sum_{i=1}^{\infty} \frac{|a_i-b_i|}{2^i}$, making it a metric space. This is not a complete metric space but if we restrict our attention to the set of polynomials of degree at most $N< \infty$, then the metric is equivalent to the product metric on $\Q^{N+1}$ and hence the obtained topological space endowed with the subspace topology is homeomorphic to the natural topology defined above.
    
    
    
    Our strategy in the remainder of this section is to find and describe a hyperplane separating the two parts $H_1$ and $H_2$ of the decomposition of $\Q[x]^+_{x=\alpha}$. These are $\Q$-convex sets so the existence of such a hyperplane would be guaranteed in the finite dimensional vector spaces $\Q[x]^{<N}$: we embed $\Q[x]^{<N}\cong \Q^N$ into $\R[x]^{<N}\cong \R^N$ and the closures of $\Q$-convex sets are closed convex sets of $\R^N$ that might only intersect on their boundary. It is well known that in this case there is a nonzero vector $(\lambda_i)_{i=1}^N\in \R^N$ such that the scalar product of $(\lambda_i)$ with all the elements of $H_1$ are nonnegative and the scalar products with all the elements of $H_2$ are nonpositive. The following example shows that for infinite dimensional space this is not always the case in general, although in our case we will be able to conclude that. Thus before we start proving Proposition \ref{propegyvaltpolinom} we give an example of a pair of ($\Q$-)convex sets in an infinite dimensional vector space, which are not separated by a hyperplane (determined by a normal vector).
    
    In what follows $LC(f)$ denotes the leading coefficient of a one-variable polynomial $f$.
    \begin{ex}
    Let \begin{equation*}\begin{split}
    A&= \{f\in \Q[x]\colon LC(f)>0\},\\
    B&= \{f\in \Q[x]\colon LC(f)<0\}
    \end{split}
    \end{equation*}
     It is clear that sets $A$ and $B$ are closed under addition and both are $\Q$-convex. Furthermore, the set $A$ is also closed under multiplication. 
    
    Let us assume indirectly that there exist a nonzero vector $(\lambda_i)_{i\in \N} \in \R^{\N}$ such that the scalar product of  $(\lambda_i)$ with the elements of $A$ is nonnegative and the scalar product of the elements of $B$ with $(\lambda_i)$  is nonpositive. Since $x^i$ is contained in $A$ for every $i \ge 0$, we have that $\lambda_i \ge 0$ for every $i \ge 0$. If for some $i$ we had $\lambda_i>0$, then for all positive $\varepsilon$ the polynomial $-\varepsilon x^{i+1}+x^i$ would belong to $B$, but its scalar product with $(\lambda_i)$ would be $\lambda_i-\varepsilon\lambda_{i+1}$, which is positive for a sufficiently small positive $\varepsilon$ -- a contradiction. Hence all $\lambda_i=0$ in contradiction with the assumption of $(\lambda_i)$ not being the zero vector.
    \end{ex}
    
    Assume that for some $N$, none of the members of the following decomposition are empty: $\Q[x]^{<N}=(H_1 \cap \Q[x]^{<N}) \sqcup (H_2  \cap \Q[x]^{<N})$.
    Then $\Q[x]^{<N} \cap H_1$ and $\Q[x]^{<N} \cap H_2$ are separated by a hyperplane as we discussed above; by this, we mean that both pieces are contained in one of the two (closed) halfspaces determined by the hyperplane.
    Such a hyperplane is determined by one of its normal vectors $v_N$. If $M \ge N$, then we denote by $v_{M,N}$ the vector consisting of the first $N$ coordinates of $v_M$.
    Further we say that two vectors $u$ and $v$ are {\it equivalent} if $u$ is a nonzero multiple of $v$. In this case we write $u \sim v$.
    
    It is clear that we may only expect $v_N$ to be unique up to multiplication by nonzero real numbers. Also, it could happen that $(H_j \cap \Q[x]^{<N})$ is contained in a proper subspace of $\Q[x]^{<N}$. We would like to exclude these pathologies and we intend to show that $v_{M,N} \sim v_N$ if $M \ge N$ ($M,N \in \N$). First we show that if there is a decomposition of $\Q[x]^+_{x=\alpha}$ into two pieces, then the linear polynomials are already separated. 
    
    \begin{lemma}\label{lem:1foku}
    If all the linear polynomials in $\Q[x]^+_{x=\alpha}$ are contained in $H_j$, then $\Q[x]^{+}_{x=\alpha}=H_j$.  
    \end{lemma}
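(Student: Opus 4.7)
The plan is to prove by strong induction on $\deg p$ that every $p \in \Q[x]^+_{x=\alpha}$ belongs to $H_j$. Degree $1$ is exactly the hypothesis. Degree $0$ follows quickly from it: picking rationals $M > -\alpha$ and $M' > \alpha$, the linear polynomials $x+M$ and $-x+M'$ are positive at $\alpha$, hence both in $H_j$, so their sum $(x+M)+(-x+M') = M+M' \in \Q^+$ lies in $H_j$. Lemma \ref{lemraceltol}(b), whose proof applies verbatim in the ring $\Q[x]^+_{x=\alpha}$ in place of $K^+$, then forces all of $\Q^+$ to lie in $H_j$.

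For the inductive step, I assume the conclusion for all degrees smaller than some $n\ge 2$ and take $p \in \Q[x]^+_{x=\alpha}$ of degree $n$ with leading coefficient $a_n \in \Q\setminus\{0\}$. The strategy is to construct a polynomial $L \in H_j$ with the same degree and same leading coefficient as $p$ but with $0 < L(\alpha) < p(\alpha)$. Then $p - L$ either vanishes identically (so $p = L \in H_j$) or has degree at most $n-1$ and is positive at $\alpha$, so it lies in $H_j$ by the inductive hypothesis; in either case $p = L + (p - L) \in H_j$.

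I build $L$ as a product of $n$ linear factors positive at $\alpha$, scaled by a positive rational, with the signs of the leading coefficients of the linear factors arranged so that the total leading coefficient matches $a_n$. If $a_n > 0$, I take
\[
L(x) = a_n \prod_{i=1}^n (x + d_i)
\]
with rationals $d_i > -\alpha$ chosen arbitrarily close to $-\alpha$ (possible by density of $\Q$ in $\R$). If $a_n < 0$, I take
\[
L(x) = |a_n|\,(-x + d_0) \prod_{i=1}^{n-1} (x + d_i)
\]
with $d_0 > \alpha$ rational just above $\alpha$ and $d_i > -\alpha$ rational as before. In both cases $L$ has degree $n$ and leading coefficient exactly $a_n$; it lies in $H_j$ because every linear factor is in $H_j$ by hypothesis and the positive rational scalar is in $H_j$ by the base case; and $L(\alpha)$, being a product of arbitrarily small positive quantities and one fixed positive rational, can be made strictly less than $p(\alpha)$.

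The main obstacle is the case $a_n < 0$ with $n$ even: a pure power $(cx+d)^n$ with $c,d \in \Q$ always has positive leading coefficient $c^n$, so no scalar multiple of such a power can reproduce the sign of $a_n$. This is resolved by inserting a single factor of the form $-x + d_0$ with $d_0 > \alpha$ rational, which flips the sign of the leading coefficient while keeping the factor positive at $\alpha$ and hence in $H_j$; the rest of the construction then goes through uniformly.
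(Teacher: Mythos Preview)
Your proof is correct and follows essentially the same approach as the paper: handle the constants first, then induct on the degree by constructing a product of linear factors (all positive at $\alpha$) matching the degree and leading coefficient of $p$ with arbitrarily small value at $\alpha$, so that the difference drops in degree and the inductive hypothesis applies. The only cosmetic difference is in the base case: the paper observes directly that every $q\in\Q^+$ is a sum of two linear polynomials positive at $\alpha$, whereas you produce one positive rational in $H_j$ and then invoke Lemma~\ref{lemraceltol}(b); both are fine.
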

    
    \begin{proof} We show that if the linear polynomials are contained in $H_j$, then the positive rationals are also in $H_j$. Indeed, every element of $\Q^+$ can be written as the sum of linear polynomials taking positive values at $\alpha$. Thus $\Q[x]^1\cap \Q[x]^{+}_{x=\alpha} \subset H_j$ implies that $\Q[x]^{<2}\cap \Q[x]^{+} \subset H_j$. Without loss of generality we may assume $j=1$. 
    
    We prove by induction on $i$ that $\Q[x]^{<i}\cap \Q[x]^{+}_{x=\alpha}= \Q[x]^{<i} \cap H_1$ for every $i \ge 3$.
    Let $g \in \Q[x]$ be of degree $i-1$ with $g(\alpha)>0$. 
    Assume first that $LC(g)>0$. Then there exists a polynomial $h$ of degree $i-1$ written as the product of a positive constant and linear terms of the form $x-\beta$, where $\beta <\alpha$, such that $0<h(\alpha)<g(\alpha)$ and $LC(g)=LC(h)$. This can always be done by choosing these $\beta$'s close enough to $\alpha$. Then $(g-h)(\alpha)>0$  and $g-h$ is of degree at most $i-2$ so $g-h$ and $h$ are in $H_1$ by induction. Since $H_1$ is closed under multiplication and addition we have $g \in H_1$.
    
    A similar argument works if $LC(g)<0$ but in this case $h$ is the product of a positive constant and $i-2$ linear term of the form $x-\beta$ and one of the form $\beta'-x$, where $\beta < \alpha < \beta'$ is chosen to be close  enough that $(g-h)(\alpha)>0$ and $LC(g)=LC(h)$. Then similarly as above $g\in H_1$ follows.
    \end{proof}
    
    \begin{lemma}\label{lemnedfokuttartalmaz}
    At least one of $H_1$ and $H_2$ contains polynomials with positive and negative leading coefficient of degree $i$ for every $2 \le i$ and $H_j \cap \Q[x]^{N} \ne \emptyset$ for $j=1,2$.
    \end{lemma}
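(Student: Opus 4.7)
My plan is to first exploit the structure of the degree-$1$ polynomials and then bootstrap multiplicatively to higher degrees. Since the decomposition is non-trivial, Lemma~\ref{lem:1foku} prevents all linear polynomials from lying in a single piece, and Corollary~\ref{cor1deg} then forces them to be split precisely according to the sign of the leading coefficient. After relabelling $H_1\leftrightarrow H_2$ if necessary, I may assume that for any rationals $\alpha_0<\alpha<\alpha_1$ one has $x-\alpha_0\in H_1$ and $\alpha_1-x\in H_2$. The second assertion of the lemma is now immediate: by multiplicative closure, $(x-\alpha_0)^N\in H_1$ and $(\alpha_1-x)^N\in H_2$ are polynomials of degree exactly $N$, so $H_j\cap\Q[x]^N\neq\emptyset$ for $j=1,2$.

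For the first assertion I proceed by induction on $i\geq 2$. In the base case, $(x-\alpha_0)^2\in H_1$ and $(\alpha_1-x)^2\in H_2$ are degree-$2$ polynomials with positive leading coefficient, while $p=(x-\alpha_0)(\alpha_1-x)$ is of degree $2$ with negative leading coefficient and is positive at $\alpha$; since $p$ must belong to $H_1$ or to $H_2$, that piece already contains (together with the corresponding square) polynomials of both signs of leading coefficient at degree $2$.

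For the induction step, suppose some $H_j$ contains degree-$(i-1)$ polynomials $f$ and $g$ with $LC(f)>0$ and $LC(g)<0$. Choose the linear polynomial $h\in H_j$ afforded by the initial setup (namely $h=x-\alpha_0$ if $j=1$ and $h=\alpha_1-x$ if $j=2$). Multiplicative closure puts $fh$ and $gh$ into $H_j$; these are of degree $i$ with leading coefficients $LC(f)LC(h)$ and $LC(g)LC(h)$, whose signs are opposite irrespective of whether $LC(h)=+1$ or $-1$. Hence $H_j$ realises both signs at degree $i$ as well. The one delicate point, and what I would flag as the main obstacle, is ensuring that the linear polynomial used to multiply $f$ and $g$ actually lies in the same piece as they do; Corollary~\ref{cor1deg} together with the relabelling in the first paragraph delivers exactly this, so the induction goes through without friction.
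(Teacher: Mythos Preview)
Your argument is correct and follows essentially the same route as the paper: use Lemma~\ref{lem:1foku} to guarantee linear polynomials in each piece, then square to get positive-leading-coefficient degree-$2$ polynomials in both pieces, observe that a degree-$2$ polynomial with negative leading coefficient must fall into one of them, and bootstrap multiplicatively by a linear factor from that same piece. The only cosmetic difference is your explicit appeal to Corollary~\ref{cor1deg} to pin down which linear polynomials lie where (note that Corollary~\ref{cor1deg} is literally stated for $\Q(\alpha)^+$, though its proof via Lemma~\ref{lemraceltol} goes through verbatim for $\Q[x]^+_{x=\alpha}$); the paper avoids this by using only that each $H_j$ contains \emph{some} linear polynomial, which already suffices for the multiplicative step.
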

    \begin{proof}
    By Lemma \ref{lem:1foku} the linear polynomials are divided into two nonempty parts. Thus $H_1$ and $H_2$ both contain squares of polynomials of degree $1$, which have a positive leading coefficient. Since $H_1 \cup H_2$ is a partition $H_1$ or $H_2$ contains polynomials of negative leading coefficient of degree $2$ as well. 
    Thus the statement is true for $i=2$. 
    
    Now since both parts contain linear polynomials, multiplying by a suitable power of such a linear polynomial shows that for every $i \ge 2$ one of $H_1$ and $H_2$ contains polynomials of degree $i$ with positive and negative leading coefficients as well.
    
    The second statement directly follows from Lemma \ref{lem:1foku} by taking the $N$'th power of a polynomial of degree $1$.
    \end{proof}

    As a consequence of Lemma \ref{lemnedfokuttartalmaz} we obtain that for every $N \ge 2$ the part of $\Q[x]^{<N}$ that evaluates positively on $\alpha$ contains elements from both parts of the decomposition. Thus we obtain a decomposition of a halfspace (the polynomials that evaluate to positive numbers) of an $N$ dimensional vector space over $\Q$ into two convex subsets. Recall that $\Q[x]^{<N}\cong \Q^N$ can be embedded in $\R^N$ such that the closures of $\Q[x]^N \cap H_i$ ($i=1,2$) in $\R^N$ are convex sets intersecting only in a hyperplane. These type of convex subsets in finite dimensional vector space over $\R$ are separated by hyperplanes and in this case the hyperplane is uniquely determined by the pieces since they cover a halfspace, the positive part of $\Q[x]^{<N}$. Moreover, $v_N$, the normal vector of the hyperplane is not equal to $(0\stb 0,1)$ since at least one of the two parts of the decomposition contains both elements with positive and negative scalar product with $(0\stb 0,1)$ by Lemma \ref{lemnedfokuttartalmaz}.
    
    As a consequence of Lemma \ref{lem:1foku} we have that that $H_i \cap \Q[x]^{N}\ne \emptyset$ for all $N\in \mathbb{N}$ and $i=1,2$, and one of them, $H_1$, say, contains the elements of $\Q^+$. Since the complement of a hyperplane containing $\Q$ does not have a $\Q$-convex intersection with $\Q[x]^{<N} \cap \Q[x]^+_{x=\alpha}$, the set $H_1$ is not contained in any proper subspace of $\Q[x]^{<N}$ for every $N \ge 2$ . We can prove the same for $H_2$. 
    
    \begin{lemma}\label{lemproper}
    $H_i$ is not contained in any proper subspace of $\Q[x]^{<N}$ for every $N \ge 2$ and $i=1,2$.
    \end{lemma}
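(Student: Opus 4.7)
The plan is to prove a slightly stronger statement: for each $i \in \{1,2\}$ and each $N \ge 2$, the $\Q$-linear span of $H_i \cap \Q[x]^{<N}$ equals all of $\Q[x]^{<N}$. This immediately implies that $H_i$ cannot be contained in any proper subspace of $\Q[x]^{<N}$.

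The first step is to produce an honest degree-$1$ polynomial inside each piece. By Lemma \ref{lem:1foku} combined with non-triviality of the decomposition (neither piece equals $\Q[x]^+_{x=\alpha}$), not all linear polynomials can lie in a single piece, so each $H_i$ contains some $\ell_i \in \Q[x]^1 \cap \Q[x]^+_{x=\alpha}$. Multiplicative closure then places the powers $\ell_i, \ell_i^2, \ldots, \ell_i^{N-1}$ inside $H_i \cap \Q[x]^{<N}$, and these are $\Q$-linearly independent since their degrees $1, 2, \ldots, N-1$ are pairwise distinct.

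It remains to exhibit a nonzero constant inside the $\Q$-span of $H_i \cap \Q[x]^{<N}$; together with the powers above it would yield an $N$-element basis of $\Q[x]^{<N}$. For the piece $H_1$ that contains $\Q^+$ (by the normalization made in the text just before the lemma), the constant $1$ is itself in $H_1$, so we are done. For $H_2$, I would apply Lemma \ref{lemraceltol}(b) to $\ell_2$: for any positive rational $q$ the polynomial $\ell_2+q$ still evaluates positively at $\alpha$, and by (b) it must lie in the same piece as $\ell_2$, namely $H_2$. Then $q = (\ell_2+q) - \ell_2$ belongs to the $\Q$-span of $H_2 \cap \Q[x]^{<2} \subseteq H_2 \cap \Q[x]^{<N}$, providing the required constant.

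The only delicate point, and what I would identify as the main (if mild) obstacle, is the $H_2$ case: the absence of $\Q^+$ inside $H_2$ means the constant cannot be produced directly, and one has to exploit Lemma \ref{lemraceltol}(b) to slide an element of $H_2$ along the rational axis without leaving the piece. Everything else reduces to routine degree bookkeeping together with closure under multiplication.
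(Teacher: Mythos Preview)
Your proof is correct, but it takes a different route from the paper's. The paper argues by induction on $N$: the base case $N=2$ is Lemma~\ref{lem:1foku}, and for the inductive step one assumes $H_i\cap\Q[x]^{<N+1}$ lies in the hyperplane orthogonal to some nonzero $w_{N+1}\in\R^{N+1}$; the inductive hypothesis forces the first $N$ coordinates of $w_{N+1}$ to vanish, so $w_{N+1}\sim(0,\dots,0,1)$, and this is ruled out by Lemma~\ref{lemnedfokuttartalmaz} (each $H_i$ meets $\Q[x]^N$). Your argument instead produces an explicit $\Q$-spanning set inside $H_i\cap\Q[x]^{<N}$: the powers $\ell_i,\ell_i^2,\dots,\ell_i^{N-1}$ of a linear polynomial supplied by Lemma~\ref{lem:1foku}, together with a constant obtained (for the piece not containing $\Q^+$) via the rational translation invariance of Lemma~\ref{lemraceltol}(b).

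Both arguments are short. Yours is more constructive and uses only Lemma~\ref{lem:1foku} and Lemma~\ref{lemraceltol}, bypassing Lemma~\ref{lemnedfokuttartalmaz} entirely; the paper's induction meshes more directly with the ambient discussion of the normal vectors $v_N$ and the exclusion of $(0,\dots,0,1)$. One small remark: Lemma~\ref{lemraceltol} is stated for decompositions of $K^+$ with $K$ a field, but its proof uses only closure under addition, multiplication, and positive rational scaling, all of which hold for the invariant pieces of $\Q[x]^+_{x=\alpha}$; the paper itself invokes it in this setting (see the proof of Lemma~\ref{leml1}), so your appeal to it is legitimate.
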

    \begin{proof}
    By Lemma \ref{lem:1foku}, the statement holds for $N=2$. Now we proceed by induction.
    Let us assume that $H_i \cap \Q[x]^{<N}$ is not contained in any proper subspace of $\Q[x]^{<N}$ but $H_i \cap \Q[x]^{<N+1}$ is contained in the subspace orthogonal to the vector $0 \ne w_{N+1} \in \R^{N+1}$. Now we consider the elements of $H_i \cap \Q[x]^{<N}$ as vectors in $\Q[x]^{<N+1}$ by appending a zero. Then, by the inductive hypothesis, for every vector $x\in \Q[x]^{<N+1}$ whose last coordinate is zero there is an element of $H_i$ that is not orthogonal to $x$. 
    Thus the first $N$ coordinates of $w_{N+1}$ have to be zero. Hence $w_{N+1}\sim(0,0 \stb 0,1)$, which contradicts the conclusion of Lemma \ref{lemnedfokuttartalmaz} that $H_i$ contains polynomials of degree $N$ with positive and negative leading coefficients.
    \end{proof}
    
    \begin{cor}\begin{enumerate}
    \item 
    There is a unique $v_N\nsim (0,\dots, 0,1) \in \R^N$ up to the equivalence $\sim$ such that the hyperplane orthogonal to $v_N$ separates $H_1 \cap \Q[x]^{<N}$ and $H_2 \cap \Q[x]^{<N}$. In particular, the closure of either $H_i$ in $\R^N$ has nonempty interior.
    \item 
    Let $N \ge 2$. The normal vectors are compatible among themselves, i.e. $v_{M,N} \sim v_N$ for every $M \ge N$. 
    \end{enumerate}
    \end{cor}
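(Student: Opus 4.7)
For part (1), existence of a separating $v_N$ and the fact $v_N \nsim (0,\dots,0,1)$ are essentially recorded in the paragraph preceding the corollary: the closures $\overline{H_1},\overline{H_2}\subset\R^N$ are convex and together cover the closed halfspace $\overline{P}=\{p:p(\alpha)\ge 0\}$, and the standard finite-dimensional separation theorem combined with Lemma \ref{lemnedfokuttartalmaz} yields both claims. The nonempty-interior statement follows from Lemma \ref{lemproper}: since $H_i$ is not contained in any proper subspace of $\Q[x]^{<N}\cong\Q^N$, it contains $N$ linearly independent elements $p_1,\dots,p_N$, and by closure under addition and multiplication by positive rationals it contains their $\Q^+$-cone, whose $\R$-closure is the full open $\R^+$-cone on $p_1,\dots,p_N$ and hence has nonempty interior.

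The substantive step of part (1) is uniqueness. I plan to argue by contradiction: suppose $v_N,v_N'$ are two inequivalent separating vectors, and after possibly replacing $v_N'$ by $-v_N'$ assume both put $\overline{H_1}$ in $\{\ge 0\}$ and $\overline{H_2}$ in $\{\le 0\}$. Then the open wedges $W^{\pm}=\{\pm v_N\cdot p>0,\ \mp v_N'\cdot p>0\}$ are disjoint from $\overline{H_1}\cup\overline{H_2}=\overline{P}$, so their closures lie in $\{w\cdot p\le 0\}$ where $w=(1,\alpha,\dots,\alpha^{N-1})$ is the halfspace normal. Farkas' lemma applied to each wedge expresses $w$ as a nonnegative combination first of $\{-v_N,v_N'\}$ and then of $\{v_N,-v_N'\}$; equating the two expressions gives $(\lambda+\lambda')v_N=(\mu+\mu')v_N'$ with nonnegative coefficients, and since $w\ne 0$ forbids total degeneration this forces $v_N\sim v_N'$, the desired contradiction.

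For part (2), the inclusion $\Q[x]^{<N}\hookrightarrow\Q[x]^{<M}$ by zero-padding identifies $v_M\cdot p$ with $v_{M,N}\cdot p$ on $\Q[x]^{<N}$, so $v_{M,N}$ is a separating functional for $H_1\cap\Q[x]^{<N}$ and $H_2\cap\Q[x]^{<N}$. The main obstacle is ruling out $v_{M,N}=0$, which I plan to handle by a minimality argument: if $v_{M,N}=0$, let $N'\in\{N,\dots,M-1\}$ be minimal with $v_{M,N'+1}\ne 0$, well-defined because $v_M\ne 0$. By minimality the first $N'$ coordinates of $v_{M,N'+1}$ vanish, so $v_{M,N'+1}\sim(0,\dots,0,1)\in\R^{N'+1}$; yet it is a separating vector for $H_1\cap\Q[x]^{<N'+1}$ and $H_2\cap\Q[x]^{<N'+1}$, contradicting Lemma \ref{lemnedfokuttartalmaz}, which places polynomials of degree $N'$ with both signs of leading coefficient into each $H_i$ and so forbids $(0,\dots,0,1)$ from separating. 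Hence $v_{M,N}\ne 0$, and since Lemma \ref{lemnedfokuttartalmaz} also rules out $v_{M,N}\sim(0,\dots,0,1)\in\R^N$, the uniqueness from part (1) delivers $v_{M,N}\sim v_N$.
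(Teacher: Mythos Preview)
Your argument is correct, with one small slip of the pen: in part~(2) you say Lemma~\ref{lemnedfokuttartalmaz} ``places polynomials of degree $N'$ with both signs of leading coefficient into each $H_i$'', but the lemma only guarantees this for \emph{at least one} of the two pieces. That is still enough to forbid $(0,\dots,0,1)$ from separating, so nothing breaks.

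Your route and the paper's differ in emphasis. For uniqueness in part~(1) the paper simply invokes Lemma~\ref{lemproper}: once both $\overline{H_i}$ are full-dimensional convex bodies whose union is the closed halfspace $\overline P$, standard convex geometry pins down the separating hyperplane. You instead make this explicit via a Farkas/wedge argument, showing that two inequivalent separating normals would force $w=(1,\alpha,\dots,\alpha^{N-1})$ to lie in both $\operatorname{cone}(-v_N,v_N')$ and $\operatorname{cone}(v_N,-v_N')$; this is longer but entirely self-contained. For part~(2) the paper first reduces to the consecutive case $M=N+1$ (where $v_{N+1,N}\neq 0$ is immediate from $v_{N+1}\nsim(0,\dots,0,1)$) and then exhibits a single rational vector $w$ with $\langle w,v_N\rangle>0>\langle w,v_{N+1,N}\rangle$, deriving a contradiction directly rather than citing part~(1). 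You keep $M$ general, dispose of the possibility $v_{M,N}=0$ by your minimality trick, and then appeal to the uniqueness already established. Both approaches are sound; yours has the virtue of cleanly reusing part~(1), at the price of the extra minimality step, while the paper's consecutive-level reduction avoids that step but effectively reproves uniqueness in miniature.
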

    \begin{proof}
    \begin{enumerate}
    \item This is a direct consequence of Lemma \ref{lemproper}.
    \item
    Let us assume indirectly that there are $M >K$ with $ v_{M,K} \not\sim v_K$. Then there is a $K \le N  < M$ with $v_{N+1,N} \not\sim v_N$. 
    As we have seen before $\Q[x]^{<N}$ naturally embeds into $\Q[x]^{<N+1}$.
    Now $v_N$ and $v_{N+1,N}$ are nonzero nonequivalent vectors since $v_{N+1} \ne (0 \stb 0,1)$, and the hyperplane orthogonal to the vector $v_{N+1,N}$ also separates $H_1$ from $H_2$. Consider now a vector $w$ that satisfies $\langle w,v_N \rangle > 0 > \langle w,v_{N+1,N}\rangle$; one of $w$ and $-w$ will lie in $\Q[x]^+_{x=\alpha}$ and thus either belong to both $H_1$ and $H_2$ or to closure of neither, which is a contradiction.
    \end{enumerate}
    \end{proof}
    
    As a corollary of this argument we might assume that $v_M$ is always an extension of $v_N$ if $M>N$. Thus we obtain an infinite sequence $(\lambda_i)\in \R^{\N}$ such that the restriction of this sequence to its first $N$ coordinates is equivalent to $v_N$ for every $N \ge 2$. 
    
    \begin{observation}
    As $\Q[x]^{<N}\cong \Q^N$ naturally embeds to $\R[x]^{<N}$, the polynomial ring $\Q[x]$ naturally embeds to $\R[x]$, an infinite dimensional vector space over $\R$. Further denote
    \[
    \R[x]^+_{x=\alpha}=\{p(x)\in \R[x]\mid p(\alpha)>0\}.
    \]
    It follows from the previous argument that $H_1$ and $H_2$ are separated by the hyperplane of the infinite dimensional space of $\R[x]$ orthogonal to the vector $(\lambda_i)_{i\in \N}$. Therefore we can take the closure of $H_1$ and $H_2$ in $\R[x]$ that is also separated by this hyperplane. This observation is critically used in the sequel.
    \end{observation}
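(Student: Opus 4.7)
The plan is to assemble the finite-dimensional separations from the preceding corollary into a single separation in the infinite-dimensional space $\R[x]$. I identify each polynomial $p = \sum_{i \ge 0} p_i x^i$ with its coefficient sequence $(p_i)_{i\in \N} \in \R^{\N}$, which has only finitely many nonzero entries, and define the linear functional $\phi : \R[x] \to \R$ by $\phi(p) = \sum_{i \ge 0} \lambda_i p_i$. This sum is finite for each $p$, so $\phi$ is well defined. The goal is to verify that, after a global choice of orientation, $H_1 \subseteq \{\phi \ge 0\}$, $H_2 \subseteq \{\phi \le 0\}$, and the same inclusions hold for the closures in $\R[x]$.

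For any $p \in \Q[x]^+_{x=\alpha}$ pick $N$ so that $p \in \Q[x]^{<N}$. By the preceding corollary, the first $N$ coordinates $(\lambda_1, \ldots, \lambda_N)$ form a positive scalar multiple of $v_N$ (the orientation having been chosen once and for all, see below), hence $\phi(p)$ has the same sign as $\langle p, v_N \rangle$. The separation on $\Q[x]^{<N}$ thus transfers verbatim to $\phi$ on $\Q[x]$. Passing to closures in $\R[x]$ is straightforward: $\phi$ restricted to the finite-dimensional subspace $\R[x]^{<N} \cong \R^N$ is continuous, so $\{\phi \ge 0\}$ and $\{\phi \le 0\}$ are closed in $\R[x]^{<N}$ and contain the respective $\R$-closures of $H_i \cap \Q[x]^{<N}$. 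Since every element of $\R[x]$ lies in some $\R[x]^{<N}$, taking the union over $N$ yields the claim for the closures of $H_1$ and $H_2$ in $\R[x]$.

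The main technical point is the global coherence of orientations, since the equivalence $v_{M, N} \sim v_N$ fixes $v_N$ only up to a nonzero real scalar, possibly negative. I would handle this by choosing $v_N$ inductively in $N$ so that $H_1 \cap \Q[x]^{<N}$ lies in the closed half-space $\{\langle \cdot, v_N\rangle \ge 0\}$. By Lemma \ref{lem:1foku} and Lemma \ref{lemproper} this set is nonempty and affinely spans $\Q[x]^{<N}$, so the requirement determines $v_N$ up to a \emph{positive} scalar. Compatibility $v_{N+1,N} = c_N v_N$ with $c_N > 0$ then holds automatically, because both $v_{N+1, N}$ and $v_N$ place $H_1 \cap \Q[x]^{<N}$ into the same half-space. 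This lets me consistently define the sequence $(\lambda_i)$ used above, and the hardest part of the argument, such as it is, is precisely this orientation bookkeeping; the remaining topological content is routine once the finite-dimensional separation has been glued together.
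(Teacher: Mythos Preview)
Your proposal is correct and follows essentially the same route as the paper, which treats the Observation as an immediate consequence of the preceding corollary and offers no further argument. You simply spell out what the paper leaves implicit: the definition of the global functional $\phi$, the orientation bookkeeping needed to upgrade $v_{M,N}\sim v_N$ to a coherently signed sequence $(\lambda_i)$, and the routine passage to closures via continuity on each $\R[x]^{<N}$. (Minor indexing quibble: the first $N$ coordinates should be $(\lambda_0,\ldots,\lambda_{N-1})$ rather than $(\lambda_1,\ldots,\lambda_N)$, but this does not affect the argument.)
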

     
    In our argument we only use hyperplanes that are defined by a vector $(\lambda_i)_{i\in \N}$ as follows: $\mathcal{H}=\{\sum a_i x^i \mid \sum a_i \lambda_i =0  \}$. From now on, we implicitly assume this for all of our hyperplanes.
    
    Above we showed that the invariant sets of a 2-decomposition of $\Q[x]^+_{x=\alpha}$ can be separated by a hyperplane of this type. 
    It will be called a \emph{separating hyperplane} of the decomposition and we denote it by $\mathcal{H}$. Next we describe the possible separating hyperplanes of $\Q[x]^+_{x=\alpha}$.
    
    Note that the separating hyperplane lives in $\R[x]$, and our condition of the pieces $H_j$ being closed under addition and multiplication is in (a subset of) $\Q[x]$. The points of the hyperplane in $\R[x]$ are typically (but not necessarily) not contained in $\Q[x]$. However, the usual addition and multiplication in $\Q[x]$ extend to those of $\R[x]$ in a continuous way, and we note that the two closed halfspaces defined by the separating hyperplane intersect $\R[x]^+$ in pieces that are also closed under addition and multiplication. Indeed, these two halfspaces contain $H_1$ and $H_2$, respectively, as a dense subset.
     
     \begin{lemma}\label{lemlimit}
     Let $\Q[x]^+_{x=\alpha}=H_1\sqcup H_2$ be a decomposition into $\Q$-convex subsets. Then for the separating hyperplane $\mathcal H$ the set $\mathcal H\cap \R[x]^+_{x=\alpha}$ consists of those elements of $\R[x]^+_{x=\alpha}$ that are limits of sequences both from $H_1$ and from $H_2$. Additionally, these sequences can be chosen be consist of polynomials of at most the same degree as their limit point.
    \end{lemma}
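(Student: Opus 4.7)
My plan is to reduce everything to a finite-dimensional picture by working inside $\R[x]^{\le d}$, where $d=\deg p$. Via the coefficient map this is $\R^{d+1}$; let $v=(\lambda_0,\dots,\lambda_d)$ be the truncation of the separating sequence $(\lambda_i)_{i\in\N}$. The compatibility $v_{M,N}\sim v_N$ established in the corollary above ensures that $v$ is, up to scaling, the normal vector of the separating hyperplane in this slice, so (after relabeling if needed) $H_1\cap \Q[x]^{\le d}\subset V_+=\{q:\langle v,q\rangle\ge 0\}$ and $H_2\cap \Q[x]^{\le d}\subset V_-=\{q:\langle v,q\rangle\le 0\}$. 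Write $V=\{q\in\R[x]^{\le d}:q(\alpha)\ge 0\}$ and $V^\circ$ for its interior. A quick preliminary remark (if $\mathcal H\cap\R[x]^{\le d}$ coincided with $\partial V$ then one of the $H_i$ would have to be empty) shows that $\mathcal H\cap V^\circ$ is nontrivial.

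The backward direction is then immediate: if $p\in\R[x]^+_{x=\alpha}$ is a limit inside $\R[x]^{\le d}$ of sequences from both $H_1$ and $H_2$ of degree at most $d$, then $p$ lies in the closures of both $H_i\cap\Q[x]^{\le d}$. These closures are contained in the closed half-spaces $V_+$ and $V_-$ respectively, forcing $p\in V_+\cap V_-=\mathcal H\cap\R[x]^{\le d}\subset\mathcal H$.

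The forward direction is the substantive part. Given $p\in\mathcal H\cap\R[x]^+_{x=\alpha}$ of degree $d$, my strategy is to establish the inclusion $V_+^\circ\cap V^\circ\subset\overline{H_1\cap\Q[x]^{\le d}}$ (closure in $\R[x]^{\le d}$). For $q$ in the open region $V_+^\circ\cap V^\circ$, I approximate $q$ by rationals $q_n\in\Q[x]^{\le d}$; both strict inequalities $q_n(\alpha)>0$ and $\langle v,q_n\rangle>0$ hold for large $n$, so $q_n\in\Q[x]^+_{x=\alpha}$. The latter inequality is strict, hence $q_n\notin V_-\supset H_2$, forcing $q_n\in H_1$. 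Taking the closure of both sides gives $V_+\cap V\subset\overline{H_1\cap\Q[x]^{\le d}}$. Since $p\in\mathcal H\cap V^\circ\subset V_+\cap V$, this produces a degree-$\le d$ sequence in $H_1$ converging to $p$; interchanging the roles of $H_1$ and $H_2$ yields the analogue in $H_2$.

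The main obstacle I expect is maintaining the strictness of the two defining inequalities under rational perturbation, so that the approximating $q_n$ are forced into $H_1$ and cannot slip into $H_2$. The combination of $H_2\subset V_-$ (a closed half-space, defined by a non-strict inequality) with the strict inequality $\langle v,q_n\rangle>0$ enjoyed by the perturbants is what rules this out. Carrying out the whole construction inside the finite-dimensional slice $\R[x]^{\le d}$ automatically delivers the bound $\deg q_n\le d$ promised in the additional part of the lemma.
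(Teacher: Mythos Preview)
Your argument is correct and follows essentially the same route as the paper: reduce to the finite-dimensional slice $\R[x]^{\le d}$, observe that $\mathcal H$ is the common boundary of the two closed half-spaces there, and use density of $\Q[x]^{\le d}$ in $\R[x]^{\le d}$ to approximate any $p\in\mathcal H\cap\R[x]^+_{x=\alpha}$ from within each open half-space. Your treatment is in fact more explicit than the paper's (which simply asserts that $H_1,H_2$ are dense in the respective half-space intersections with $\R[x]^+_{x=\alpha}$), and your preliminary remark ruling out $\mathcal H\cap\R[x]^{\le d}=\partial V$ is exactly what is needed to ensure $V_+^\circ\cap V^\circ\neq\emptyset$ so that $\overline{V_+^\circ\cap V^\circ}=V_+\cap V$.

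One small caveat worth noting (present in the paper as well): the case $d=0$ is degenerate, since then the truncated normal vector $v=(\lambda_0)$ may vanish and, more importantly, $\Q^+$ lies entirely in one $H_i$ by Lemma~\ref{lemraceltol}, so a positive constant cannot be approximated from both sides by degree-$0$ sequences. This does not affect the application in Lemma~\ref{lemmultadd}, where only $N\ge 2$ is used, but strictly speaking you should take $d\ge 1$ (equivalently, work in $\R[x]^{<N}$ with $N\ge 2$ as the paper does).
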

    
    \begin{proof}
    Take any $N\in \N$. In $\R[x]^{<N}$, the set $\mathcal{H} \cap \R[x]^{<N}$ is the common boundary of its two halfspaces, so only elements of $\mathcal H$ can be limits of sequences in both $H_1$ and $H_2$. On the other hand, as  $H_1 \sqcup H_2=\Q[x]^+_{x=\alpha}$, $H_1$ and $H_2$ are dense in the intersections of corresponding halfspaces with $\R[x]^+_{x=\alpha}$. Take an arbitrary positive element $h \in \mathcal{H}\cap \R[x]^{<N}$. Since $\Q[x]^{<N}$ is dense in $\R[x]^{<N}$, there exists a sequence $(h_n)_{n\in \N}$ in $\Q[x]^{<N}\cap H_1$ that converges to $h$ in $\R[x]^{<N}$. The same argument can be applied to a sequence of elements $H_2\cap \Q[x]^{<N}$ tending to $h$ as well.
    \end{proof}

    \begin{lemma}\label{lemmultadd}
    Let $\mathcal{H}\subset\R[x]$ be a separating hyperplane for a $2$-decomposition of $\Q[x]^+_{x=\alpha}$. Then $\mathcal{H} \cap \R[x]^+_{x=\alpha}$ is closed under addition and multiplication (i.e., invariant).
    \end{lemma}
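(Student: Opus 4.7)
The plan is to use Lemma \ref{lemlimit} as the main tool. That lemma identifies $\mathcal{H}\cap\R[x]^+_{x=\alpha}$ with the set of polynomials in $\R[x]^+_{x=\alpha}$ that are simultaneously limits of sequences from $H_1$ and from $H_2$, with the approximating polynomials of degree no greater than that of the limit. Once this characterisation is available, closure of $\mathcal{H}\cap \R[x]^+_{x=\alpha}$ under addition and multiplication should reduce to a standard continuity argument, since each $H_j$ is closed under these operations and both addition and multiplication are continuous maps on the finite-dimensional spaces $\R[x]^{<N}$.

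Concretely, let $h, h' \in \mathcal{H}\cap\R[x]^+_{x=\alpha}$, and set $N=\max(\deg h, \deg h')+1$. Using Lemma \ref{lemlimit} I would extract sequences $(h_n), (h'_n) \subset H_1$ with $h_n\to h$ and $h'_n\to h'$, as well as sequences $(\tilde h_n), (\tilde h'_n) \subset H_2$ converging to the same limits, all with degree bounded in terms of $\deg h$ and $\deg h'$. Because evaluation at $\alpha$ is continuous and $h(\alpha), h'(\alpha)>0$, for all sufficiently large $n$ the elements $h_n+h'_n$, $\tilde h_n+\tilde h'_n$, $h_nh'_n$, $\tilde h_n \tilde h'_n$ all evaluate positively at $\alpha$. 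Since $H_1$ and $H_2$ are each closed under addition and multiplication, these sums and products lie in the respective $H_j$. Continuity of addition on $\R[x]^{<N}$ and of multiplication on $\R[x]^{<\deg h+\deg h'+1}$ yields $h_n+h'_n, \tilde h_n+\tilde h'_n \to h+h'$ and $h_n h'_n, \tilde h_n \tilde h'_n \to hh'$. Applying the characterisation from Lemma \ref{lemlimit} once more places both $h+h'$ and $hh'$ in $\mathcal{H}\cap\R[x]^+_{x=\alpha}$.

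I do not anticipate a serious obstacle. The only mildly delicate point is verifying that the sums and products of approximating polynomials actually lie in $H_j$ (i.e. that they do not drift outside $\Q[x]^+_{x=\alpha}$), but this is immediate from the continuity of evaluation at $\alpha$ together with the strict positivity of $h(\alpha)$ and $h'(\alpha)$. The degree bookkeeping required to apply Lemma \ref{lemlimit} in the reverse direction is also straightforward since $\deg(h+h')\le \max(\deg h,\deg h')$ and $\deg(hh')\le \deg h+\deg h'$.
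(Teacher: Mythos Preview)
Your proposal is correct and follows essentially the same approach as the paper: use Lemma~\ref{lemlimit} to characterise $\mathcal H\cap\R[x]^+_{x=\alpha}$ via simultaneous approximation from $H_1$ and $H_2$, then exploit invariance of the $H_j$ together with continuity of the ring operations on $\R[x]^{<N}$. The only differences are cosmetic: the paper dispatches closure under addition in one line by noting that $\mathcal H$ is a linear subspace (so the approximation argument is only needed for multiplication), and your ``mildly delicate point'' about sums and products staying in $\Q[x]^+_{x=\alpha}$ is in fact a non-issue, since $h_n,h'_n\in H_j$ already forces $h_n+h'_n,\,h_nh'_n\in H_j$ by invariance of $H_j$.
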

    \begin{proof}
    Since $\mathcal{H}$ is a linear subspace and $\R[x]^+_{x=\alpha}$ is closed under both addition and multiplication, $\mathcal{H} \cap \R[x]^+_{x=\alpha}$ is closed under addition.
    
    Now we prove that $\mathcal{H}\cap \R[x]^+_{x=\alpha}$ is closed under multiplication as well. Namely, we show that for all $N$, if we multiply two elements of $\mathcal{H} \cap \R[x]^{<N} \cap \R[x]^+_{x=\alpha}$, then the product lies in $\mathcal{H} \cap \R[x]^{<2N-1} \cap \R[x]^+_{x=\alpha}$. By Lemma \ref{lemnedfokuttartalmaz} we have that if $H_1$ and $H_2$ cut $\Q[x]^+_{x=\alpha}$ into two pieces, then  both pieces intersect $\Q[x]^{<N}$ non-trivially for all $N\ge 2$.
    
    Now we take two elements $h,k \in \mathcal{H}\cap \R[x]^{<N} \cap \R[x]^+_{x=\alpha}$. By Lemma \ref{lemlimit} 
    there exist sequences $(h_{i,n})_{n\in \N}$ and $(k_{i,n})_{n\in\N}$ in $\Q[x]^{<N}\cap H_i$ ($i=1,2$) that are converging in $\R[x]^{<N}$ to $h$ and $k$, respectively. Since $h_{i,n} k_{i,n}$ are in $H_i$, Lemma \ref{lemlimit} implies that $hk\in\mathcal{H}$. 
    Moreover it is clear that $hk\in \R[x]^+_{x=\alpha}$ as $hk$ is a limit of polynomials that evaluate positively on $\alpha$ and $hk(\alpha)\ne 0$, finishing the proof of the lemma.
    \end{proof}

    \begin{cor}
    $\Q \subset \mathcal{H}$.
    \end{cor}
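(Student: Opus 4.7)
The plan is to prove the stronger statement that $1 \in \mathcal{H}$; since $\mathcal{H}$ is a real linear subspace of $\R[x]$, this immediately yields $\Q = \Q \cdot 1 \subset \mathcal{H}$. Let $L \colon \R[x] \to \R$ be the linear functional determined by the sequence $(\lambda_i)$, namely $L(\sum a_i x^i) = \sum \lambda_i a_i$, so that $\mathcal{H} = \ker L$. By orienting $L$ appropriately we may assume that $L(p) \ge 0$ for all $p$ in the closure of $H_1$ and $L(p) \le 0$ for all $p$ in the closure of $H_2$; this is exactly the separating-hyperplane property established earlier in the section.

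By Lemma \ref{lemraceltol} the set $\Q^+$ is entirely contained in one of the two pieces; without loss of generality we take $\Q^+ \subset H_1$. In particular $1 \in H_1$, so $L(1) \ge 0$. For the reverse inequality, pick any element $h \in H_2$, which exists because the decomposition is non-trivial. Then for every positive integer $n$ the polynomial $h + n$ still evaluates positively at $\alpha$, and Lemma \ref{lemraceltol}(b) gives $h + n \in H_2$. Therefore
\[
0 \ge L(h + n) = L(h) + n L(1) \quad \text{for every } n \in \N.
\]
If $L(1) > 0$, the right-hand side would tend to $+\infty$, contradicting the bound. Hence $L(1) \le 0$, and combined with $L(1) \ge 0$ we conclude $L(1) = 0$, i.e. $1 \in \mathcal{H}$.

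I do not expect a real obstacle here: the decomposition-internal constraint $h + n \in H_2$ supplied by part (b) of Lemma \ref{lemraceltol} is exactly what is needed to force the constant coordinate $\lambda_0$ of the separating functional to vanish, and every other ingredient (existence, uniqueness and separating property of $\mathcal{H}$, containment of $\Q^+$ in a single piece) has already been assembled. The only point requiring mild care is the legitimacy of orienting $L$ consistently on the two halfspaces, which follows from the fact that $H_1$ and $H_2$ lie on opposite sides of $\mathcal{H}$ and each intersects the interior of its halfspace by Lemma \ref{lemproper}.
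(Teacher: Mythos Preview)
Your argument is correct but follows a different route from the paper's. The paper appeals to Lemma~\ref{lemlimit}: each positive rational $q$ is approximated in $\R[x]^{<2}$ by linear polynomials $r_1x-r_2$ (positive leading coefficient) on one side and $r_1'-r_2'x$ (negative leading coefficient) on the other, which by Corollary~\ref{cor1deg} lie in different pieces; being a two-sided limit, $q$ lands on $\mathcal H$. You instead work directly with the defining functional $L$ and exploit the translation-closure of the pieces from Lemma~\ref{lemraceltol}(b): from $h+n\in H_2$ for every $n$ you get $L(h)+n\lambda_0\le 0$, forcing $\lambda_0\le 0$, while $1\in H_1$ gives $\lambda_0\ge 0$. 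Your route is slightly more economical---it needs neither the double-limit characterisation of $\mathcal H$ nor the description of how the linear polynomials split---and isolates that it is exactly the rational-translation invariance of the pieces that kills $\lambda_0$; the paper's route, by contrast, reuses the approximation machinery just developed and is more geometric in flavour.
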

    \begin{proof}
    Every element of $\Q$ can be approximated by $r_1\alpha-r_2$ and $r'_1-r'_2\alpha$, which are in different parts by Corollary \ref{cor1deg}. Hence by the previous argument $\Q\in \mathcal{H}$.
    \end{proof}
    
    \begin{cor}\label{corwhole}
    The hyperplane $\mathcal H$ itself is closed under multiplication.
    \end{cor}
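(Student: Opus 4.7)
The plan is to reduce the claim to Lemma \ref{lemmultadd} by shifting arbitrary elements of $\mathcal{H}$ into $\R[x]^+_{x=\alpha}$ using positive rational constants. The point is that $\Q \subset \mathcal{H}$ (by the previous corollary) and $\mathcal{H}$ is an $\R$-linear subspace, so adding a rational constant to any element of $\mathcal{H}$ stays in $\mathcal{H}$, while a large enough positive constant forces the result to evaluate positively at $\alpha$.

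Concretely, given $h,k\in\mathcal{H}$, I would pick rationals $q_1>-h(\alpha)$ and $q_2>-k(\alpha)$ so that $h+q_1,\,k+q_2\in\R[x]^+_{x=\alpha}$; both shifted polynomials still belong to $\mathcal{H}$. By Lemma \ref{lemmultadd}, the product $(h+q_1)(k+q_2)$ lies in $\mathcal{H}\cap\R[x]^+_{x=\alpha}\subseteq\mathcal{H}$. Expanding,
\[
(h+q_1)(k+q_2) \;=\; hk + q_1 k + q_2 h + q_1 q_2,
\]
and the three terms $q_1 k$, $q_2 h$, $q_1 q_2$ all lie in $\mathcal{H}$: the first two because $\mathcal{H}$ is a real subspace, the third because $\Q\subset\mathcal{H}$. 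Hence
\[
hk \;=\; (h+q_1)(k+q_2) - q_1 k - q_2 h - q_1 q_2 \;\in\; \mathcal{H},
\]
again by linearity. This gives closure of $\mathcal{H}$ under multiplication.

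There is essentially no obstacle here beyond checking that the shift trick is legitimate; once we know $\Q\subset\mathcal{H}$ and that $\mathcal{H}\cap\R[x]^+_{x=\alpha}$ is closed under multiplication, the identity above does all the work.
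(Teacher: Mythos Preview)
Your argument is correct and is genuinely different from the paper's proof. The paper argues that the multiplicative closure condition on $\mathcal H$ is an algebraic identity in the coefficients $(a_i),(b_i)$ that, by Lemma \ref{lemmultadd}, holds on the relatively open subset of the plane $\{\sum\lambda_ia_i=\sum\lambda_ib_i=0\}$ cut out by the positivity inequalities; a Zariski closure argument then forces it to hold on the whole plane. Your route is more elementary: you exploit the already-proven fact $\Q\subset\mathcal H$ to translate arbitrary $h,k\in\mathcal H$ into $\mathcal H\cap\R[x]^+_{x=\alpha}$, apply Lemma \ref{lemmultadd} there, and subtract the correction terms using linearity. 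Your approach avoids any appeal to Zariski density, at the cost of depending on the preceding corollary; the paper's approach is logically independent of $\Q\subset\mathcal H$ and illustrates a technique that would transfer to other polynomial identities.
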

    
    \begin{proof}
    Lemma \ref{lemmultadd} tells us that $\mathcal H \cap \R[x]^+_{x=\alpha}$ is invariant, so its defining coefficients $(\lambda_i)$ satisfy the corresponding algebraic condition
    \begin{equation}\label{eqinvariant}
    \begin{aligned}
    \forall (a_i),(b_i): \sum_i \lambda_i a_i=0,\sum_i a_i \alpha^i >0,\sum_i \lambda_i b_i=0,\sum_i b_i \alpha^i >0 \Longrightarrow\\
    \Longrightarrow\sum_i \lambda_i \sum_{0 \le j \le i} a_j b_{i-j} =0.
    \end{aligned}
    \end{equation}
    The equalities $\sum_i \lambda_i x_i=0,\sum_i \lambda_i y_i=0$ define a closed plane in each $\R[x]^{<N} \times \R[x]^{<N}$, and on it the two additional inequalities cut out a relative open set. Since the conclusion holds on this open set, it must also hold on its Zariski closure, which is the whole plane. That is, the condition \eqref{eqinvariant} holds true without requiring the inequalities and $\mathcal H$ is indeed closed under multiplication.
    \end{proof}
    
    \bigskip
    
    Our next step in describing the hyperplanes that can serve as a separating hyperplane is to determine which are closed under multiplication.
    
    
    Recall that the hyperplane $\mathcal{H}$ in $\R[x]$ is uniquely (up to scaling) determined by a sequence $(\lambda_i)_{i \in \mathbb{N}}$ of real numbers. A polynomial $p(x)=\sum_{i=0}^{\deg(p)} \alpha_i x^i$ is in $\mathcal{H}$ if and only if $\sum_{i=0}^{\deg(p)} \alpha_i \lambda_i =0$.  
    
    \begin{lemma}\label{leml1} $\lambda_1\ne 0$.
    \end{lemma}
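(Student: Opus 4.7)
The plan is to argue by contradiction, leveraging the fact (Corollary \ref{corwhole}) that the full hyperplane $\mathcal{H} \subset \R[x]$, not just its positive part, is closed under multiplication. The corollary immediately preceding the lemma already shows $\Q \subset \mathcal{H}$; evaluating the defining equation $\sum_i a_i\lambda_i=0$ on the coefficient vector $(1,0,0,\dots)$ of the polynomial $1$ yields $\lambda_0=0$ as a free by-product.

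Assume now that $\lambda_1=0$ as well. Then the monomial $x$, whose coefficient vector is $(0,1,0,0,\dots)$, satisfies the hyperplane equation $\sum_i a_i \lambda_i = \lambda_1 = 0$, so $x \in \mathcal{H}$. Applying Corollary \ref{corwhole} inductively gives $x^n = x \cdot x^{n-1} \in \mathcal{H}$ for every $n \ge 1$, i.e.\ $\lambda_n=0$ for every $n \ge 1$. Combined with $\lambda_0=0$, this forces the entire sequence $(\lambda_i)$ to vanish, contradicting the fact that it is the nonzero normal vector defining the proper hyperplane $\mathcal{H}$.

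There is no real obstacle here; the proof is essentially a one-line deduction from Corollary \ref{corwhole}. The only subtle point is conceptual: one must remember that $\mathcal{H}$ is a subspace of the ambient $\R[x]$ and that the multiplicative closure statement applies to $\mathcal{H}$ itself. The monomials $x^n$ need not lie in $\R[x]^+_{x=\alpha}$ (they could vanish or be negative at $\alpha$), so the argument cannot be run inside $\R[x]^+_{x=\alpha}$ — it is exactly the upgrade from closure of $\mathcal{H}\cap \R[x]^+_{x=\alpha}$ to closure of $\mathcal{H}$ proved in Corollary \ref{corwhole} that makes the induction on $n$ work.
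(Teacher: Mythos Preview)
Your proof is correct and follows essentially the same path as the paper's: assume $\lambda_1=0$, deduce $x\in\mathcal{H}$, and exploit multiplicative closure of $\mathcal{H}$ (Corollary~\ref{corwhole}) to reach a contradiction. Your finish is slightly more direct---you read off $\lambda_n=0$ from $x^n\in\mathcal{H}$---whereas the paper argues that all polynomials with zero constant term lie in $\mathcal{H}$ and then invokes Lemma~\ref{lemraceltol} (closure under rational translation) to conclude that $\mathcal{H}$ contains an open set; the core idea is the same.
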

    \begin{proof}
    By the way of contradiction, let us suppose that $\lambda_1=0$. Then $x \in \mathcal{H}$. Since $\mathcal{H}$ is closed under multiplication and addition by Lemma \ref{lemmultadd} we have that all polynomials 
    whose constant coefficient is $0$ are contained in $\mathcal{H}$. Also by Lemma \ref{lemraceltol} we have that each piece of the decomposition is closed under translation by positive rational numbers, hence so is $\mathcal{H}$. This shows that $\mathcal{H}$ contains an open set in $\R[x]$, contradicting the fact that $\mathcal{H}$ is a separating hyperplane. 
    \end{proof}
    
    As a special case we obtain again that the (positive) linear polynomials are cut into two nonempty pieces by $H_1$ and $H_2$.
    
    It is clear that $\lambda_i x-\lambda_1 x^{i}$ and $\lambda_j x-\lambda_1 x^{j}$ belong to $\mathcal{H}$. By Lemma \ref{lemmultadd}, the product of two such polynomials also belongs to $\mathcal{H}$. Thus 
    $$
    \lambda_i \lambda_j x^2 -\lambda_i\lambda_1 x^{j+1} - \lambda_j \lambda_1 x^{i+1} +\lambda_1^2 x^{i+j} \in \mathcal{H},
    $$
    which is equivalent to the fact that 
    $$
    \lambda_i \lambda_j \lambda_2 -\lambda_i\lambda_1 \lambda_{j+1} - \lambda_j \lambda_1 \lambda_{i+1} +\lambda_1^2 \lambda_{i+j}=0.
    $$
    In particular, choosing $i=2$ gives the following recursion for the coefficients:
    \begin{equation*} 
    \lambda_2^2 \lambda_j - \lambda_1 \lambda_2 \lambda_{j+1} - \lambda_1 \lambda_3 \lambda_j + \lambda_1^2 \lambda_{j+2} = 0.
    \end{equation*}
    
     This recursion for the coefficients shows that $\lambda_1$, $\lambda_2$ and $\lambda_3$ determine the hyperplane $\mathcal{H}$.
    Since the sequences $(\lambda_i)$ and $(\delta \lambda_i)$ determine the same hyperplane for every $\delta \in \R\setminus \{ 0\}$ we may assume that $\lambda_1=1$.
    
    \begin{lemma}\label{lemelvalasztohipersik}
    The following are the only hyperplanes in $\R[x]$ with $\lambda_0=0$ and $\lambda_1\neq 0$ that are closed under multiplication:
    \begin{enumerate}
    \item $\mathcal{H}_{\beta,\gamma} = \{ p \in \Q[x] : p(\beta) = p(\gamma) \}$ for some real numbers $\beta \ne \gamma$, 
    \item $\mathcal{H}_{\delta }= \{ p \in \Q[x] : p'(\delta) = 0 \}$ for some $\delta \in \R$,
    \item $\mathcal{H}_{z,\overline z} = \{ p \in \Q[x] : p(z) = p(\overline z) \} = \{ p \in \R[x] : \operatorname{Im} p(z) = 0\}$ for some $z=u+iv \in \C\setminus \R$.
    \end{enumerate}
    \end{lemma}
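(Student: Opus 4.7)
The plan is to exploit the recursion
\[
\lambda_{j+2} = \lambda_2 \lambda_{j+1} + (\lambda_3 - \lambda_2^2)\lambda_j
\]
derived immediately before the lemma, after normalising $\lambda_1 = 1$. This is a linear recurrence of order two with constant coefficients; together with the initial data $\lambda_0 = 0$, $\lambda_1 = 1$, the entire sequence $(\lambda_i)$ is determined by the two free parameters $\lambda_2, \lambda_3$. I would solve it through its characteristic polynomial $t^2 - \lambda_2 t - (\lambda_3 - \lambda_2^2)$, whose roots $r_1, r_2 \in \C$ control the structure of the solution, and split into cases according to the nature of these roots.

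If $r_1 \neq r_2$, the general solution is $\lambda_j = A r_1^j + B r_2^j$, and the initial conditions force $\lambda_j = (r_1^j - r_2^j)/(r_1 - r_2)$. The hyperplane condition $\sum_i \alpha_i \lambda_i = 0$ then reads $p(r_1) = p(r_2)$: if both roots are real, this is $\mathcal{H}_{\beta, \gamma}$ of type (1); if $r_1 = \bar r_2 = z \in \C \setminus \R$, then $\lambda_j = \operatorname{Im}(z^j)/\operatorname{Im}(z) \in \R$ and the condition becomes $\operatorname{Im} p(z) = 0$, giving $\mathcal{H}_{z, \bar z}$ of type (3). In the repeated-root case $r_1 = r_2 = \delta \in \R$, the general solution $\lambda_j = (A + Bj)\delta^j$ together with the initial conditions yields $\lambda_j = j\delta^{j-1}$ (with $0^0 = 1$ to handle $\delta = 0$), and the hyperplane becomes $p'(\delta) = 0$, matching type (2). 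Since $(\lambda_i)$ is a real sequence, these three subcases (two distinct reals, complex conjugate pair, or real double root) exhaust all possibilities.

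For the converse direction I would verify that each of the three families is a codimension-one $\R$-subspace of $\R[x]$ closed under multiplication. This is routine: type (1) follows since $p_1(\beta) = p_1(\gamma)$ and $p_2(\beta) = p_2(\gamma)$ imply $(p_1 p_2)(\beta) = (p_1 p_2)(\gamma)$; type (2) uses the Leibniz rule $(p_1 p_2)'(\delta) = p_1'(\delta) p_2(\delta) + p_1(\delta) p_2'(\delta)$; type (3) uses that $\R$ is a subring of $\C$, so that $p_1(z), p_2(z) \in \R$ implies $(p_1 p_2)(z) \in \R$. The main obstacle I anticipate is the careful book-keeping of the degenerate subcases (one of $\beta, \gamma$ equal to zero, $\delta = 0$, or $\lambda_2 = \lambda_3 = 0$) so that each falls unambiguously into exactly one of the three listed families, and confirming that the initial conditions $\lambda_0 = 0, \lambda_1 = 1$ are automatically compatible with the values of $\lambda_2, \lambda_3$ entering the characteristic polynomial.
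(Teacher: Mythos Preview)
Your proposal is correct and follows essentially the same approach as the paper: both use the order-two linear recursion (derived just before the lemma) to conclude that the normalised sequence is determined by $(\lambda_2,\lambda_3)$, and then match this data to the three families. The only cosmetic difference is that you solve the recurrence explicitly via its characteristic polynomial and read off $\beta,\gamma,\delta,z$ as its roots, whereas the paper instead computes $(\lambda_2,\lambda_3)$ for each of the three families and checks that together they cover the parameter plane (the discriminant of your characteristic polynomial being exactly $4\lambda_3-3\lambda_2^2$, which is the paper's trichotomy $\lambda_2^2 \lessgtr \tfrac{4}{3}\lambda_3$).
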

    \begin{proof}
    It is readily verified that these hyperplanes are closed under multiplication. The argument above implies that the pair $(\lambda_2, \lambda_3)$ (under the assumption $\lambda_1=1$) determines such a hyperplane. We will show that all values $(\lambda_2,\lambda_3) \in \R^2$ occur in one of the listed cases, and this will finish the proof of Lemma \ref{lemelvalasztohipersik}. To calculate $\lambda_i$, we will use the polynomial $x^i-\lambda_i x$ that has to belong to $\mathcal H$.
    
    
    In the first case ($\mathcal H_{\beta,\gamma}$) we get the condition $\gamma^i-\lambda_i \gamma=\beta^i-\lambda_i \beta$, which yields $\lambda_i=\frac{\beta^{i}-\gamma^{i}}{\beta-\gamma}$ and in particular
    \begin{align*}
    \lambda_2&=\beta+\gamma,\\
    \lambda_3&=\beta^2+\beta\gamma+\gamma^2.
    \end{align*}
    It is straightforward to see that these value pairs cover the region $\lambda_2^2<\frac{4}{3}\lambda_3$ of the parameter space.
    
    In the second case  ($\mathcal H_{\delta}$) we get the condition $i\delta^{i-1}-\lambda_i=0$, in particular
    \begin{align*}
    \lambda_2&=2\delta, \\
    \lambda_3&=3\delta^2,
    \end{align*}
    and these cover the parameter pairs that satisfy $\lambda_2^2=\frac{4}{3}\lambda_3$.
    
    Finally, in the third case ($\mathcal H_{z,\overline z}$) we get
    \begin{align*}
    \lambda_2 &=  \frac{\operatorname{Im} z^2}{\operatorname{Im} z}=2u,\\
    \lambda_3&=\frac{\operatorname{Im} z^3}{\operatorname{Im} z}=3u^2-v^2,
    \end{align*}
    covering the rest of the parameter space ($\lambda_2^2 > \frac{4}{3}\lambda_3$).
    
    \end{proof}
    All of these hyperspaces naturally separate two classes of polynomials determining the possible decompositions of $\Q[x]$:
    \begin{itemize}
    \item $\mathcal{H}_{\beta,\gamma}$ splits $\Q[x]$ into 
    \begin{align*}
        S_{\beta,\gamma}&=\left\{ p \in \Q[x] : p(\beta) > p(\gamma) \right\} \text{ and}\\
        S_{\gamma,\beta}&=\left\{ p \in \Q[x] : p(\gamma) > p(\beta) \right\};
    \end{align*}
    \item $\mathcal{H}_{z,\overline z}$ splits $\Q[x]$ into 
    \begin{align*}
        S_{z,\overline z}&=\left\{ p \in \Q[x] : \operatorname{Im} p(z) = - \operatorname{Im} p(\overline z) > 0 \right\}\text{ and} \\
        S_{\overline z,z}&=\left\{ p \in \Q[x] : \operatorname{Im} p(z) = - \operatorname{Im} p(\overline z) < 0 \right\};
    \end{align*}
    \item $\mathcal{H}_{\delta}$ splits $\Q[x]$ into
    \begin{align*}
        S^+_{\delta}&=\left\{ p \in \Q[x] : p'(\delta) > 0 \right\}\text{ and} \\
        S^-_{\delta}&=\left\{ p \in \Q[x] : p'(\delta) < 0 \right\}.
    \end{align*}
    
    \end{itemize}
    It remains to check which of these classes are closed under multiplication in $\Q[x]^+_{x=\alpha}$. 
    We claim that the separating hyperspace for a $2$-decomposition of $\Q[x]^+_{x=\alpha}$ can only be $H_{\alpha}$ and it follows from the following lemma.
    \begin{lemma}\label{lemnemzart}\begin{enumerate}
    \item 
     For every  $\beta, \gamma \in \R \setminus \{ \alpha \}$ such that $\beta \ne \gamma$ the open subspaces
    $S_{\beta,\gamma}\cap \Q[x]^+_{x=\alpha}$ are not closed under multiplication. Further, if $\beta \ne \alpha$, then $S_{\alpha,\beta}\cap \Q[x]^+_{x=\alpha}$ is not closed under multiplication.
    \item For every $z \in \C \setminus \R$ the open subspaces $S_{z,\overline z}\cap \Q[x]^+_{x=\alpha}$ are not closed under multiplication.
    \item For every $\delta\ne \alpha\in \R$ the open subspaces $S_{\delta}^{+}\cap \Q[x]^+_{x=\alpha}$ and $S_{\delta}^{-}\cap \Q[x]^+_{x=\alpha}$ are not closed under multiplication.
    \end{enumerate}
    \end{lemma}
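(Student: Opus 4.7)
The plan is to exhibit, in each of the three cases, a pair of rational polynomials $p, q$ lying in the claimed open half-space $S \cap \Q[x]^+_{x=\alpha}$ whose product lies strictly in the opposite open half-space. Since every condition involved is strict (hence open with respect to the evaluation data) and polynomial multiplication is continuous, it suffices to produce \emph{real} witnesses $P, Q \in \R[x]^{<3}$ with the desired behaviour: any rational approximation $p, q \in \Q[x]^{<3}$ sufficiently close to $P, Q$ then inherits all the strict inequalities on $p$, $q$ and on $pq \in \R[x]^{<5}$ by continuity of the evaluation maps and the density of $\Q[x]^{<3}$ in $\R[x]^{<3}$.

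To realize $P$ and $Q$ with arbitrarily prescribed evaluation data, one checks that the relevant linear evaluation map $\R[x]^{<3} \to \R^3$ is invertible. In case~(1), $p \mapsto (p(\alpha), p(\beta), p(\gamma))$ is a Vandermonde map, nonzero because $\alpha, \beta, \gamma$ are pairwise distinct. In case~(2), $p \mapsto (p(\alpha), \operatorname{Re} p(z), \operatorname{Im} p(z))$ has determinant $\operatorname{Im} z \cdot \bigl((\operatorname{Re} z - \alpha)^2 + (\operatorname{Im} z)^2\bigr)$, nonzero since $z \notin \R$. In case~(3), $p \mapsto (p(\alpha), p(\delta), p'(\delta))$ has determinant $(\delta-\alpha)^2$, nonzero by the hypothesis $\delta \ne \alpha$. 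The subcase $\beta = \alpha$ of~(1) uses the analogous invertible map on $\R[x]^{<2}$.

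With invertibility in hand, the witnesses are built by making one evaluation coordinate of $P$ or $Q$ large and negative, so that the cross term of $PQ$ flips the defining inequality. For case~(1) with $\alpha, \beta, \gamma$ distinct, take $(P(\alpha), P(\beta), P(\gamma)) = (1, 1, -\varepsilon)$ and $(Q(\alpha), Q(\beta), Q(\gamma)) = (1, -\varepsilon, -1)$ with $\varepsilon > 0$ small: then $P, Q \in S_{\beta,\gamma}$ while $(PQ)(\beta) = -\varepsilon < \varepsilon = (PQ)(\gamma)$, so $PQ \in S_{\gamma,\beta}$; the subcase $\beta = \alpha$ is settled by $(P(\alpha), P(\gamma)) = (Q(\alpha), Q(\gamma)) = (1, -2)$. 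For case~(2), take $P(\alpha) = Q(\alpha) = 1$, $P(z) = -M + i$, $Q(z) = 1 + i$ with $M$ large, so that $(PQ)(z) = -(M+1) + (1-M)i$ has negative imaginary part. For case~(3), set $(P(\alpha), P(\delta), P'(\delta)) = (Q(\alpha), Q(\delta), Q'(\delta)) = (1, -M, 1)$ with $M$ large, giving $(PQ)'(\delta) = -2M < 0$; reversing the sign of the derivative coordinate handles $S_\delta^-$ symmetrically. No single step is genuinely delicate; the only bookkeeping is to ensure the rational perturbation is small enough to preserve all finitely many strict inequalities simultaneously, which follows from joint continuity on the finite-dimensional spaces $\R[x]^{<3}$ and $\R[x]^{<5}$.
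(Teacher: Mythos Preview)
Your proof is correct and follows essentially the same strategy as the paper: construct explicit polynomial witnesses with prescribed evaluation data at the relevant points so that the product lands in the opposite half-space. The paper's version is terser---using a single polynomial and its square $p^2$ in cases~(1) and~(3) and a power $p^n$ in case~(2) rather than distinct pairs $P,Q$---but your explicit invertibility check for the evaluation maps and the density/continuity argument passing from real to rational coefficients make the existence of rational witnesses more transparent than the paper's bare assertion that such $p$ exist.
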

    \begin{proof}
    \begin{enumerate}
    \item 
    If $\alpha$, $\beta$ and $\gamma$ are 3 different real numbers, then there exists $p \in \Q[x]^+_{x=\alpha}$ such that $p(\gamma) >0$ and $p(\beta) <- p(\gamma)$. In this case $p^2(\beta) >p^2(\gamma)$ so $S_{\gamma, \beta}\cap\Q[x]^+_{x=\alpha}$ is not closed under multiplication. 
    The same argument works for $S_{\alpha,\beta}$ if $\beta \ne \alpha$.
    \item It can easily be seen that if $p \in \Q[x]^+_{x=\alpha}$ with $p(z) \notin \R$ and $\operatorname{Im}(p(z))>0$, then for a suitable $n \in \N$ we have $\operatorname{Im}(p(z)^n)<0$. This shows that  $S_{z,\overline z}\cap \Q[x]^+_{x=\alpha}$ is not closed under multiplication.
    \item If $\beta \neq \alpha$, then there exist $p_+,p_- \in \Q[x]^+_{x=\alpha}$ such that $p_\pm(\beta)<0$, while $p_+'(\beta) > 0$ and $p_-'(\beta) <0$. Taking this $p_\pm$ one gets $p_\pm \in S^\pm_{\delta} \cap \Q[x]^+_{x=\alpha}$, but $p_\pm^2 \in S^\mp_{\delta} \cap \Q[x]^+_{x=\alpha}$, so neither of $S^\pm_{\delta} \cap \Q[x]^+_{x=\alpha}$ is closed under multiplication.
    \end{enumerate}
    \end{proof}
    
    \noindent
    \textit{Proof of  Proposition \ref{propegyvaltpolinom}.} 
    It follows from Lemma \ref{lemnemzart} that $\mathcal{H}_{\alpha}$ is the only possible separating hyperplane. 
    Therefore $H_1$ and $H_2$ must each contain one the open halfspaces $H_+$ and $H_-$ defined by $\mathcal H_\alpha$, with $H_0$ covering the remaining elements of $\Q[x]^+_{x=\alpha}$.
    \qed
    
    \section{Lifting the decomposition}\label{sec5}
    In this section we show how a 2-decomposition of $\Q[\alpha]^{+}$ for a transcendental $\alpha$ extends to $\Q(\alpha)^{+}$. As we described the possible 2-decompositions of $\Q[\alpha]^{+}$, this will imply the characterisation of 2-decompositions stated in Theorem \ref{thm1valt}.
    
    \medskip
    \noindent
    {\it Proof of Theorem \ref{thm1valt}.}
    Suppose that $\Q(\alpha)^{+} =H_1 \sqcup H_2$ is a non-trivial decomposition of $\Q(\alpha)^{+}$ that is closed under addition and multiplication. This decomposition restricts to $\Q[\alpha]^+ = (H_1 \cap \Q[\alpha]^+) \sqcup (H_2 \cap \Q[\alpha]^+)$. One of these sets may be empty but we will first assume that this is not the case and conclude that then this splitting uniquely determines the decomposition of $\Q(\alpha)^+$ as described in \eqref{eqparts}. 
    
    By Proposition \ref{propegyvaltpolinom} we have two choices for 
    $H_1 \cap \Q[\alpha]^+$ and $H_2 \cap \Q[\alpha]^+$. Without loss of generality we can assume that $H_+\subset H_1$ (i.e., $H_1$ contains all polynomials that have positive derivative at $\alpha$) and $H_{-}\subset H_2$ (i.e., $H_2$ contains all polynomials that have negative derivative at $\alpha$). The constant rational polynomials can be included in $H_1$ or $H_2$ arbitrarily, 
    but two nonconstant polynomials whose derivatives have different signs at $\alpha$ must lie in different classes of the decomposition. 
    
    We would like to decide which set contains $\frac{p_1}{p_2}(\alpha)$ for $p_1, p_2\in \Q[x]^+_{x=\alpha}$ coprime polynomials, with $p_2$ nonconstant. If $p_1(\alpha)\in H_1$ and $p_2(\alpha)\in H_2$, then $\frac{p_1}{p_2}(\alpha)\in H_1$, otherwise $p_2\cdot \frac{p_1}{p_2}=p_2$ would imply that $p_1(\alpha)\in H_2$, a contradiction. The same argument works with the role of $H_1$ and $H_2$ reversed, therefore, from now on we assume that the sign of $p_1'(\alpha)$ and $p_2'(\alpha)$ is the same, and $p_1$ is not a (non-zero) rational multiple of $p_2$.
    Since $\alpha$ is transcendental we have $p_1(\alpha) \ne p_2(\alpha)$. We show that there exists a linear polynomial $l(x)=ax+b\in \Q[x]^{+}_{x=\alpha}$ such that $(p_1l)'(\alpha)$ and $(p_2l)'(\alpha)$ have different signs. Indeed, $(p_i(x)(ax+b))'=p_i'(x)(ax+b)+a p_i(x)$. Such an expression is positive at $\alpha$ if and only if $\frac{p_i'(\alpha)}{p_i(\alpha)} >\frac{-a}{a \alpha+b}$. Note that in our case  $a \alpha+b$ has to be positive. 
    
    We claim that $\frac{p_1'(\alpha)}{p_1(\alpha)} \ne \frac{p_2'(\alpha)}{p_2(\alpha)}$. The equality would imply $(p_1'p_2-p_1p_2')(\alpha)=0$, which would in turn give $\left(\frac{p_1}{p_2}\right)'(\alpha)=0$. 
    
    
    Now it remains to find $a$ and $b$ in $\Q$ such that $\frac{-a}{a \alpha +b}$ lies between 
    $c:=\left(\frac{p_1'}{p_1}\right)(\alpha)$ and $d:=\left(\frac{p_2'}{p_2}\right)(\alpha)$. This can be done by choosing $a$ to be a  rational number between $c$ and $d$ and then choosing $b \in \Q$ in a way that $a \alpha +b$ is suitably close to $1$. 
    
    To summarise, the algorithm that decides which of $H_1$ or $H_2$ contains $t =\frac{p_1}{p_2}\in \Q(x)^+_{x=\alpha}$ (with $p_1,p_2 \in \Q[x]^+_{x=\alpha}$) is as follows. If $p_2$ is constant or $p_1$ and $p_2$ are in different classes of the decomposition, then $t$ is in the same class as $p_1$. Otherwise, it follows from the previous argument that $t(x)$ can also be written as $\frac{p_1(x)l(x)}{p_2(x)l(x)}$, where $p_1(x)l(x)$ and $p_2(x)l(x)$ are not in the same class. This assign a membership to $t$ in $H_1$ or $H_2$. Note that the decomposition extends uniquely from $\Q[\alpha]^+$ to $\Q(\alpha)^+$. Thus, if $\Q[\alpha]^+$ is decomposed into two pieces non-trivially, then the only possible decomposition of $\Q(\alpha)^+$ is the one given in Theorem \ref{thm1valt}.

    
    To finish the proof of Theorem \ref{thm1valt} it remains to prove that if there is a 2-decomposition of $\Q(\alpha)^+$, then $\Q[\alpha]^+$ is also decomposed into two parts non-trivially. Indirectly, suppose that $\Q[\alpha]^+$ is contained in one of the invariant sets, say $\Q[\alpha]^+\subset H_1$. This also implies that $\Q\subset H_1$. We show that in this case $\Q(\alpha)^+\subset H_1$. For this it is enough to show that for every polynomial $q\in \Q[x]^+_{x=\alpha}$ the rational function $\frac{1}{q}(\alpha)$ also belongs to $H_1$. Indeed, if it is the case, then all $\frac{p}{q}(\alpha)\in H_1$ as the product of $p(\alpha) \in H_1$ and $\frac{1}{q}(\alpha)\in H_1$ and $H_1$ is closed under multiplication. Therefore, we indirectly assume that there is a $q \in \Q[x]^+_{x=\alpha}$ such that $\frac{1}{q}(\alpha) \in H_2$. Now we take $\Q\left(\frac{1}{q}\right)^+_{\frac{1}{q}=\frac{1}{q(\alpha)}}$, which is the set of the positive elements of a field of transcendence degree 1. Since $\Q\subset H_1$ and $\frac{1}{q}(\alpha)\in H_2$, $\Q(\frac{1}{q})^+_{\frac{1}{q}=\frac{1}{q(\alpha)}}$ is cut into two pieces non-trivially, hence its decomposition is determined by 
    the sign of the derivative with respect to $\frac{1}{q}$ evaluated at $\alpha$.
    
    In particular, as $\Q[q]^+_{q=q(\alpha)}\subset \Q(\frac{1}{q})^+_{\frac{1}{q}=\frac{1}{q(\alpha)}}$, $\Q[q]^+_{q=q(\alpha)}$ is also decomposed into two parts non-trivially. Indeed, 
    take any rational number $r>q(\alpha)$. The numbers $q(\alpha)$ and $r-q(\alpha)$ are both positive and the derivatives (with respect to $\frac1q$) of $q(x)$ and $r-q(x)$ as rational functions of $\frac{1}{q}$ have opposite signs at $\alpha$. Hence $q(\alpha)$ and $r-q(\alpha)$ belong to different $H_i$'s, in contradiction to our initial indirect assumption.
    
    Thus we get that if there is a 2-decomposition of $\Q(\alpha)^+$, then $\Q[\alpha]^+$ is also decomposed into two parts non-trivially. This finishes the proof of Theorem \ref{thm1valt}.
    \qed
    
    \begin{remark}
    As we have seen earlier each element of $\Q(x)^+_{x=\alpha}\setminus \Q[x]^+_{x=\alpha}$ can be written as $\frac{p}{q}$, where $p(\alpha)$ and $q(\alpha)$ are positive. Also, it is clear that such a quotient is not uniquely written so $p'(\alpha)$ and $q'(\alpha)$ have different sign.
    Now having positive derivative means that the corresponding real valued function is locally increasing at $\alpha$ and function with negative derivative at $\alpha$ locally decrease at $\alpha$. Having assumed that all these functions are positive at $\alpha$ we obtain that a decreasing decreasing function divided by one that is increasing is decreasing and vice-versa. This confirms that the extension preserves the property that the invariant sets are determined by a derivation.
    \end{remark}
    
    \section{Algebraic extension}\label{sec6}
    In this section we prove that there is no nontrivial decomposition of an algebraic extension of $\Q$ into 2 pieces. Now we recall Theorem \ref{thmalgebraic}. 
    
    \medskip
    \noindent
    {\bf Theorem 1.2.}
    Let $a$ be an algebraic element over $\Q$. Then $\Q(a)^{+}$ is not $2$-decomposable.
    
    \begin{proof}
    Let $F=\Q(a)$, where $a$ is an algebraic number and let $n$ be the degree of the extension $[F:\Q]$. $\Q(a)$ is an $n$ dimensional vector space over $\Q$, and we consider it embedded into $\R^n = \Q^n \otimes \R$ in the natural way. The multiplication on $\Q(a)$ also induces a multiplication on $\R^n$, making it a commutative ring. Any partition of $\Q(a)^+$ into $\Q$-convex sets gives rise to a covering of a closed halfspace of $\R^n$ by the closures of the partition classes.
    Since our space is finite-dimensional, these closures are separated by a hyperplane $H$.
    
    Consider now the ring homomorphism $\Phi: \R[x] \to \R^n$,
    \[
    \Phi\left(\sum c_i x^i\right)=\sum {a^i \otimes c_i} \in \Q(a) \otimes \R,
    \]
    Its restriction to $\Q[x]$ is surjective and sends $p$ to $p(a)$, so taking the preimages of the partition classes on $\Q(a)^+$ gives a partition of $\Q[x]^+_{x=a}$ into 2 nonempty invariant sets. These sets are separated by the preimage $\Phi^{-1}(H)$ of the hyperplane $H$, so this preimage is a separating hyperplane and contains the kernel of $\Phi$.
    
    \begin{lemma}
    The kernel of $\Phi$ is the ideal generated by the minimal polynomial of $a$.
    \end{lemma}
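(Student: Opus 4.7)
The plan is to recognize $\Phi$ as the standard isomorphism identifying $\R[x]/(m)$ with the scalar extension $\Q(a) \otimes_\Q \R$, so that computing its kernel reduces to a finite-dimensional dimension count.

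First I would verify that $\Phi$ is an $\R$-algebra homomorphism, which is immediate from the definition: additivity and $\R$-linearity are clear from the formula, and the multiplicativity follows because the product on $\R^n = \Q(a)\otimes_\Q \R$ is defined by extending the product on $\Q(a)$ in an $\R$-bilinear way, so $\Phi(x^i)\Phi(x^j) = (a^i\otimes 1)(a^j\otimes 1) = a^{i+j}\otimes 1 = \Phi(x^{i+j})$. Next, I would check that $m(x)\in\ker\Phi$. Writing $m(x) = \sum c_i x^i$ with $c_i\in\Q$, rationality of the coefficients lets one pull them through the tensor product: $\Phi(m) = \sum a^i\otimes c_i = \sum c_i a^i \otimes 1 = m(a)\otimes 1 = 0$. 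This gives the inclusion $(m)\subseteq \ker\Phi$.

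For the reverse inclusion, $\Phi$ factors through an $\R$-algebra homomorphism $\overline\Phi: \R[x]/(m) \to \Q(a)\otimes_\Q \R$. The target is an $\R$-vector space of dimension $n = [\Q(a):\Q]$, with basis $\{a^i\otimes 1 : 0\le i\le n-1\}$, while the source is also an $\R$-vector space of dimension $n$, with basis the classes of $1, x, \ldots, x^{n-1}$ (using that $m$ is monic of degree $n$). Since $\overline\Phi$ sends $x^i$ to $a^i\otimes 1$ for $0\le i\le n-1$, it maps a basis to a basis and is therefore an isomorphism. In particular $\overline\Phi$ is injective, so $\ker\Phi = (m)$.

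The only subtlety worth underlining is that when viewing $\Q(a)\otimes_\Q\R$ as a ring, one must know that the multiplication coming from $\Q(a)$ extends uniquely $\R$-bilinearly; once this is granted, the argument above is purely a dimension count and there is no real obstacle. If desired, one can phrase the whole argument more symbolically by writing $\Q(a)\cong\Q[x]/(m)$ and using the fact that tensor product commutes with quotients, $(\Q[x]/(m))\otimes_\Q\R \cong \R[x]/(m)$, which is exactly the statement that $\overline\Phi$ is an isomorphism.
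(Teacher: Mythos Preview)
Your proof is correct and follows essentially the same idea as the paper's: both show $(m)\subseteq\ker\Phi$ and then conclude equality by a codimension/dimension count, the paper in one terse sentence and you with the details spelled out (factoring through $\R[x]/(m)$ and checking that a basis goes to a basis). Nothing is missing.
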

    \begin{proof}
    The multiples of the minimal polynomial of $a$ are clearly in the kernel of $\Phi$, and form a set of codimension $n$.
    \end{proof}
    
    \noindent
    By Proposition \ref{propegyvaltpolinom}, the separating hyperplane can only be $\mathcal H_a$ (notation of Lemma \ref{lemelvalasztohipersik}), but the minimal polynomial of $a$ is in the kernel of $\Phi$ but not in $\mathcal H_a$ - a contradiction.
    \end{proof}

    \begin{cor}\label{coralgebraic}
    Let $K$ be an algebraic field extension of $\Q$. Then there is no $2$-decomposition of $K$. 
    \end{cor}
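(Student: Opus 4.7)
The plan is to reduce the general algebraic case to the already-established simple extension case of Theorem \ref{thmalgebraic}. Suppose for contradiction that $K^+ = H_1 \sqcup H_2$ is a nontrivial $2$-decomposition into invariant pieces. The idea is that any potential decomposition is ``witnessed'' by a pair of elements lying in different classes, and these two elements generate a finitely generated subfield to which the decomposition restricts.

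Concretely, pick $x \in H_1$ and $y \in H_2$, and set $L = \Q(x,y) \subseteq K$. Since $K/\Q$ is algebraic, $L$ is a finitely generated algebraic extension of $\Q$, hence a finite extension. As $\Q$ has characteristic zero, the primitive element theorem yields $L = \Q(a)$ for some algebraic $a$. The sets $H_1 \cap L^+$ and $H_2 \cap L^+$ are invariant (intersections of invariant sets), disjoint, and cover $L^+$; moreover, they are both nonempty since $x \in H_1 \cap L^+$ and $y \in H_2 \cap L^+$. This produces a nontrivial $2$-decomposition of $\Q(a)^+$, contradicting Theorem \ref{thmalgebraic}.

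There is essentially no obstacle here beyond recognising that the primitive element theorem is the right tool; once $L$ is expressed as $\Q(a)$, the contradiction is immediate from the previous theorem. The only minor subtlety worth noting is that one must verify that the restricted pieces are nonempty, which is precisely why $x$ and $y$ are chosen from different classes at the outset.
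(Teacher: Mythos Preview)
Your proof is correct and follows essentially the same approach as the paper: reduce to a finite subextension by picking witnesses from each piece, invoke the primitive element theorem (the paper phrases this as ``$\Q$ is perfect so every finite extension is simple''), and apply Theorem~\ref{thmalgebraic}. Your version is slightly more explicit in constructing $L=\Q(x,y)$, but the argument is the same.
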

    \proof Let $K$ be an algebraic field over $\Q$. Each non-trivial decomposition of $K^+$ gives a non-trivial decomposition of some subfield of $K$ that is a finite extension of $\Q$. Since $\Q$ is a perfect field every finite extension of $\Q$ is simple. Therefore it is enough to show that there is no $2$-decomposition of $F=\Q(a)$ and we are done by Theorem \ref{thmalgebraic}.
    \qed
    \section{Further directions}\label{sec7}
    There are several different questions arising. We have determined the possible 2-decompositions of a degree 1 transcendental extension of $\Q$. There are several obstacles on going one step further in many directions. 
    \begin{itemize}
    \item 
    One option would be to try to determine all $k$-decompositions of $\Q(\alpha)^+$ for every positive integer $k$. The main obstacle here is to prove that the parts of the decomposition are separated by hyperplanes (which are closed under multiplication and are defined by a nonvanishing sequence of coefficients). If it were true we could use Lemma \ref{lemelvalasztohipersik} and a suitable adaptation of Lemma \ref{lemnemzart} to determine all decompositions into finitely many pieces. Thus we conjecture that $\Q(\alpha)^+$ does not have proper $k$-decompositions if $k>3$ and there is a unique $3$-decomposition, which is the following: 
    \[
    \Q(\alpha)^+ =H_+ \sqcup H_0 \sqcup H_- .
    \]
    
    \item 
    It is rather an interesting open question whether there exists a decomposition of $\Q(\alpha)^+$ or at least $\Q[\alpha]^+$ into (countably) infinitely many sets that are closed under addition and multiplication. Such a construction could be given by taking each $l_r(\alpha)\in \Q[\alpha]^+$ into different sets for different $r\in \Q$, where $l_r(x)$ are linear functions of the form $x-r$ or $r-x$ (depending on the positivity of $l_r(\alpha)$). This would imply dissection of $\Q[\alpha]^+$ by rings generated by linear functions. It can be seen that the elements of these rings are disjoint. On the other hand, it is easy to show that not all quadratic functions are covered in this way so we should extend this construction for polynomials of higher degree. We note that it is not trivial to decide whether this construction can be extended to $\Q[\alpha]^+$ and further to $\Q(\alpha)^+$. The problem  of finding disjoint invariant sets leads to seeking the solutions of equations of type $P\circ Q(x)= R\circ S(x)$, where $P, Q\in \Q^+[x]$ are polynomials and $R,S \in \Q^+(x)$ are rational functions. Even if $P, Q\in \Q^+[x]$ are also polynomials, the answer to this problem is not clear. 
    
    Note that for general $P,Q,R,S\in \Q[x]$ this problem was first investigated and solved by J. F. Ritt \cite{Ritt}. On the other hand, the general solution, if $P,Q,R,S\in \Q(x)$ are rational functions is not known. The best of our knowledge is based on the work of F. Pakovich \cite{Fed}, therefore the interested reader is referred to that and the reference therein.
    
    Concerning our investigation we ask whether this construction can work at least for $\Q[\alpha]^+$ and if so we conjecture that there is a decomposition of $\Q(\alpha)^+$ into countably infinitely many invariant sets. 
    
    \item
    It would be nice to extend our result to transcendental extensions of degree at least 2. The potential separating hyperplanes, which are closed under multiplication, can be described as in Section \ref{secdegree1} and at least if the transcendence degree of the extension over $\Q$ is 2 we obtain that the possible separating hyperplanes are determined by derivations. (This has been verified by the authors but since this has no immediate consequence, we do not include the proof here.) However, the existence of separating hyperplanes is not yet clear even in the case of $2$-decompositions. Moreover, generally it seems to be a challenging problem to prove or disprove  whether any $2$-decomposition of $\R$ is determined by derivations (or by the concept of separating hyperplanes).

    \item It is also an interesting question how we can determine the finite decompositions of $\Q(\alpha_1, \dots, \alpha_m)^+$ into invariant sets for algebraically independent elements $\alpha_1, \dots, \alpha_m\in \R$ over $\Q$. 
    To illustrate a difficulty that arises we take the field $K=\Q(\alpha_1,\alpha_2)$, where $\alpha_1,\alpha_2$ are algebraically independent elements, and we try to find all 2-decomposition of $K^+$. One may try to prove that all such decompositions come from taking a derivation of the form $d=c_1 \frac{\partial}{\partial \alpha_1}+c_2\frac{\partial}{\partial \alpha_2}$, where $c_1, c_2\in \R$ are arbitrary. Then we can find two different 2-decompositions of $K$ similar to those in Theorem \ref{thm1valt} such that 
    \begin{equation}\label{eqdersplit}
    \begin{split}
        H_1&=\{r\in K \colon d(r)\ge 0\}, H_2=\{r\in K \colon d(r)< 0\} \\
        H'_1&=\{r\in K \colon d(r)> 0\}, H'_2=\{r\in K \colon d(r)\le 0\}. 
    \end{split}
    \end{equation}
    But in fact, there are other 2-decompositions of $K$. For example, we take the sets 
    \begin{align*}
    H^1_+&=\left\{r\colon\frac{\partial r(\alpha_1, \alpha_2)}{\partial \alpha_1}>0\right\},\\
    H^1_-&=\left\{r\colon\frac{\partial r(\alpha_1, \alpha_2)}{\partial \alpha_1}<0\right\},\\
    H^1_0&=\left\{r\colon\frac{\partial r(\alpha_1, \alpha_2)}{\partial \alpha_1}=0\right\}.
    \end{align*}
    Then we take $H^1_0$ and divide it into 2 invariant set as follows: 
    \begin{align*}
    H^{1,2}_+&=H^1_0\cap \left\{r\colon\frac{\partial r(\alpha_1, \alpha_2)}{\partial \alpha_2}>0\right\},\\
    H^{1,2}_{0,-}&=H^1_0\cap \left\{r\colon\frac{\partial r(\alpha_1, \alpha_2)}{\partial \alpha_2}\le 0\right\}.
    \end{align*}
    Now set $H_1=H^1_+\cup H^{1,2}_+$ and $H_2=H^1_+\cup H^{1,2}_{0,-}$. It is straightforward to verify that this gives a 2-decomposition of $K$. Moreover, it is easy to show that $H_1$ 
    cannot be a set of the form \eqref{eqdersplit} defined by a derivation $d$ that is a linear combination of the partial derivatives.
    Note however that this example is still based on the idea of using separating hyperplanes.

    \end{itemize}
    \section*{Acknowledgement}
    G. Kiss was supported by the J\'anos Bolyai Fellowship of the Hungaria Academy of Sciences, the New National Excellence Program ÚNKP-22-5-ELTE-1154 New National Excellence Program of the Ministry for Culture and
    Innovation and the Hungarian National Research, Development and Innovation Office - NKFIH (grant no. K124749, no. K142993).
    
    G. Somlai is a Fulbright research fellow at the Graduate Center of the City University of New York. This research exchange program is also supported by the Magyar Állami Eötvös Ösztöndíj.
    
    G. Somlai is a J\'anos Bolyai Fellowship holder and supported by the OTKA grant no. SNN 132625.

    \end{document}